\title[Review of Symbolic Logic]{How Much Propositional Logic Suffices for Rosser's Essential Undecidability Theorem?}
\author[G. Badia, P. Cintula, P. H\'ajek, and A.Tedder]
{GUILLERMO BADIA \\ School of Historical and Philosophical Inquiry, University of Queensland \\ PETR CINTULA, PETR H\'AJEK, and ANDREW TEDDER \\ The Institute of Computer Science of the Czech Academy of Sciences \\\footnote{Received August 2019}}
\newcommand{\logic}[1]{\ensuremath{\mathrm #1}}
\newcommand{\lang}[1]{\ensuremath{\mathcal #1}}
\newcommand{\LOG}{\ensuremath{\mathfrak{L}}}
\newcommand{\Log}{\ensuremath{\logic{L}_0}}
\newcommand{\PLog}{\ensuremath{\logic{{QL}}_0}}
\newcommand{\DSL}{\ensuremath{\logic{{SL}}_w}}
\newcommand{\PDSL}{\ensuremath{\logic{{{Q{SL}}}_w}}}
\newcommand{\plogic}[1]{\ensuremath{\mathrm{Q}\logic{#1}}}
\newcommand{\on}{\ensuremath{\overline{n}}}
\newcommand{\om}{\ensuremath{\overline{m}}}
\newcommand{\ok}{\ensuremath{\overline{k}}}
\newcommand{\0}{\ensuremath{\overline{0}}}
\newcommand{\1}{\ensuremath{\overline{1}}}
\newcommand{\f}{\ensuremath{\varphi}}
\newcommand{\p}{\ensuremath{\psi}}
\renewcommand{\x}{\ensuremath{\chi}}
\newcommand{\fm}{\emph{Fm}}
\newcommand{\Pow}{\ensuremath{\wp}}
\newcommand{\wdash}{\mathbin{\rhd}}
\newcommand{\onn}{\ensuremath{\overline{n+1}}}
\newcommand{\all}{\ensuremath{\forall}}
\newcommand{\exi}{\ensuremath{\exists}}
\newcommand{\All}[1]{\ensuremath{(\all{#1})}}
\newcommand{\Exi}[1]{\ensuremath{(\exi{#1})}}
\newcommand{\vectn}[1]{\ensuremath{#1_1,\ldots,#1_n}}
\newcounter{qcounter} 
\newenvironment{prooflist}
{\begin{list}{\hbox to 10pt{\texttt{\alph{qcounter})}}}{\usecounter{qcounter}
\setlength\itemsep{1pt}
\setlength\parskip{0pt}
\setlength\parsep{0pt}
\setlength\topsep{0.4ex}
\setlength\labelwidth{25pt}
\addtolength\labelsep{5pt}
\setlength\leftmargin{22pt}
}}
{\end{list}\smallskip}
\rm \begin{trivlist}\item\begin{tabular}{l@{\quad}l@{\qquad}l}}
\begin{document}

\maketitle

\begin{abstract}
In this paper we explore the following question: how weak can a logic be for Rosser's essential undecidability result to be provable for a weak arithmetical theory? It is well known that Robinson's $Q$ is essentially undecidable in intuitionistic logic, and P. H\' ajek proved it in the fuzzy logic BL for Grzegorczyk's variant of $Q$ which interprets the arithmetic operations as non-total non-functional relations. We present a proof of essential undecidability in a much weaker substructural logic and for a much weaker arithmetic theory, a version of Robinson's $R$ (with arithmetic operations also interpreted as mere relations). Our result is based on a structural version of the undecidability argument introduced by Kleene and we show that it goes well beyond the scope of the Boolean, intuitionistic, or fuzzy logic.
\end{abstract}

\footnotetext{This paper is an extension and generalisation of work started by Cintula and H\'{a}jek before the unfortunate death of the latter in 2016. H\'{a}jek is included among the authors in recognition of his work on this topic, and with the blessing of his family, but it should be noted that he was not able to contribute directly to the final version of this paper.}

\section{Introduction}

In Theorem III of Rosser (1936), it was famously established that Peano Arithmetic was \emph{essentially undecidable} (a notion only properly named later by Tarski (1949)); that is, no consistent extension of it is decidable (see Tarski et al (1953) for the standard reference on this topic). After Rosser's essential undecidability theorem, it was natural to ask for weaker theories of arithmetic that would still yield undecidability along similar lines. Robinson (1950) provided the perhaps best known example of such a theory, namely, Robinson's Arithmetic $Q$.

The most noteworthy other essentially undecidable weakening of $Q$ which will play a special role here is $R$, also due to Robinson (see Tarski et al (1953), p.~53\footnote{See Visser (2014) for a survey of results involving $R$ and Vaught (1962) for the original reference regarding undecidability of this theory. Further noteworthy results on $R$ are given by Jones and Shepherdson (1983).}), which allows for the so-called \emph{structural} essential undecidability argument (to borrow the terminology from \v{S}vejdar (2008)) due originally to Kleene (1950); see Proposition~15.9 and Theorem~15.19 of Monk (1976) for the textbook version of the argument, or below for our rendering. Below are the standard axioms of $R$ and $Q$:\footnote{In the axiomatisation of $R$, $\leq$ is often taken to be a defined predicate, which allows for ($R$4) to be simplified to just the left-to-right direction of our biconditional version. Since we shall take $\leq$ as primitive, we include both directions. Note also that we use $n$ to refer to a number, and $\on$ to refer to the associated numeral (this will be properly defined in Section~\ref{s:MainResult}).}

\pagebreak

%

\begin{center}
\begin{tabular}{@{\hspace{-17ex}}l@{\hspace{-40ex}}c}
\begin{tabular}{l@{\ \ }l}
$(R1)$ & $\om +\on= \overline{m+n}$
\\ $(R2)$ & $\om\cdot\on= \overline{m \cdot n}$
\\ $(R3)$ & $\om\neq \on$ \,\,\,\, for $m\neq n$
\\ $(R4)$ & $x\leq \on \leftrightarrow (x=\0 \vee x=\1 \vee \dots \vee x=\on)$
\\ $(R5)$ & $x\leq \overline{n } \lor \on\leq x$
\end{tabular}
&
\begin{tabular}{l@{\ \ }l}
$(Q1)$ & $S(x)\neq\0$
\\ $(Q2)$ & $S(x)=S(y)\to x=y$
\\ $(Q3)$ & $x\neq\0\to(\exists y)(x=S(y))$
\\ $(Q4)$ & $x+\0=x$
\\ $(Q5)$ & $x+S(y)=S(x+y)$
\\ $(Q6)$ & $x\cdot \0=\0$
\\ $(Q7)$ & $x\cdot S(y)=(x\cdot y)+x$
\\ $(Q8)$ & $x\leq y\leftrightarrow(\exists z)x+z=y$
\end{tabular}
\end{tabular}
\end{center}

Given the proliferation of non-classical logical systems in the literature, the question not only of potential weakenings of the arithmetic theory but also of the background \emph{propositional} logic become salient, and our aim here is to consider how much (or how little) propositional logic suffices for something like Kleene's argument. Our strategy will consist in a close inspection of the structural essential undecidability argument in order to generalise it to the non-classical case, with an eye to the question: what logical principles are actually required for the argument to work and in which non-classical settings are they available?

It was already well-known at least since the 1950s (see Kleene (1952)) that the undecidability results for $Q$ hold intuitionistically as well as classically. H\'ajek (2007) showed that it was also true for a wide variety of fuzzy logics and an arithmetic theory called $Q^\backsim$, a variant of $Q^-$, introduced by Grzegorczyk (2006) (see also \v{S}vejdar (2007)) which results from $Q$ by replacing the addition and multiplication functions by ternary predicates $A$ and $M$.

\interfootnotelinepenalty=1

We shall show that even against the background of a weaker logic than H\'ajek ever considered, we can prove essential undecidability for a cousin (in fact, a weakening against a certain logical background) of $Q^\backsim$ that we call $R^\backsim$, which is a natural generalization of $R$. $R^\backsim$ and $Q^\backsim$  are axiomatised as follows:

\begin{itemize}
\itemsep=0em
\item[] $(R^\backsim1)$ \; $A(\om,\on,x)\leftrightarrow \overline{m+n}=x$
\item[] $(R^\backsim2)$ \; $M(\om,\on,x)\leftrightarrow \overline{m \cdot n}=x$
\item[] $(R^\backsim3)$ \; $\om\neq \on$ \,\,\,\, for $m\neq n$
\item[] $(R^\backsim4)$ \; $x\leq \on \leftrightarrow (x=\0 \vee x=\1 \vee \dots \vee x=\on)$
\item[] $(R^\backsim5)$ \; $x\leq \overline{n } \lor \on\leq x$
\item[] $(R^\backsim6)$ \; $x\leq\on\lor\neg(x\leq\on)$
\end{itemize}
\begin{itemize}
\itemsep=0em
\item[] $(Q^\backsim0)$ \; $x=y\lor x\neq y$
\item[] $(Q^\backsim1)$ \; $S(x)\neq\0$
\item[] $(Q^\backsim2)$ \; $S(x)=S(y)\to x=y$
\item[] $(Q^\backsim3)$ \; $x\neq\0\to(\exists y)(x=S(y))$
\item[] $(Q^\backsim4)$ \; $A(x, \0, y)  \leftrightarrow  x=y$
\item[] $(Q^\backsim5)$ \; $A(x, S(y), z)  \leftrightarrow  (\exists u)(A(x, y, u) \wedge z=S(u) )$
\item[] $(Q^\backsim6)$ \; $M(x, \0, y)  \leftrightarrow  y=\0$
\item[] $(Q^\backsim7a)$ $ M(x, S(y), z)\to (\exists u)(M(x, y, u) \wedge A(u, x, z))$
\item[] $(Q^\backsim7b)$ $M(\om,\on,u)\to(A(u,\on, x)\to M(\om,\overline{n+1},x))$
\item[] $(Q^\backsim8)$ \; $x\leq y \leftrightarrow  (\exists z) A(z, x, y)$
\end{itemize}

\begin{remark}
A brief comment is in order concerning concerning these axioms. First note that our additional axiom ($R^\backsim$6) is an instance of excluded middle. We include ($R^\backsim$6) because the weaker of our two logics does not allow its derivation from the others (though see Prop.~\ref{fund}, in which we derive it from the other $R^\backsim$ axioms in our stronger logic (along with the assumption of excluded middle for equalities)). Furthermore, it should be noted that adding the assumption of functionality and totality of $A,M$ to $R^\backsim$ or $Q^\backsim$ results in $R$, $Q$ respectively, in classical logic. Finally, in Section~\ref{s:QandR} we comment further on the relation of our axiomatisation $Q^\backsim$ with the system studied by H\'{a}jek.
\end{remark}

In Section~\ref{s:StructProof} we present the necessary basic definitions in an abstract setting and outline the aforementioned \emph{structural} essential undecidability argument. We shall see that the crucial ingredient of the proof is the existence of formulas separating two disjoint recursively enumerable sets: call such a formula one which \emph{strongly separates} the sets in question. In Section~\ref{s:logic} we present our weak logic and prove the existence of strongly separating formulae for $R^\backsim$ against the background of this logic in Section~\ref{s:MainResult} (after establishing its completeness with respect to the standard model of arithmetic for $\Sigma_1$-formulas) and finally in Section~\ref{s:QandR} we show that, in a slightly stronger logical setting (presented in Section~\ref{s:stronger-logic}), a variant of H\' ajek's $Q^\backsim$ strengthens our $R^\backsim$, and thus our results indeed generalize those of H\'ajek (2007).

\section{The Structural Proof of Essential Undecidability}\label{s:StructProof}

The first ingredients we need are the \emph{formulas.} As we do not yet want to bind ourselves to any particular syntax (propositional or first order), let us only assume that we have a countable set of formulas $\fm$ which contains a special subset of formulas that we will suggestively call the $\Sigma_1$-formulas and that for each $\Sigma_1$-formula $\f$ there is a special formula (not necessarily a $\Sigma_1$-formula) which we call the negation of $\f$ and suggestively denote $\neg\f$.\footnote{Note that $\neg\f$ need not be a formula \emph{per se,} it is merely a notation of the negation of $\f$. The argument below would work perfectly well if all formulas would be $\Sigma_1$-formulas; we however in principle assume it can be a proper subset to cover a wider logical setting. See mainly our notion of consistency below.}

The second ingredient is that of a {\em logic} $\LOG$ which is identified with a consequence relation $\vdash_\LOG$ over $\fm$, i.e.,  ${\vdash_\LOG} \subseteq \Pow(\fm)\,\times\, \fm$ and for each $\Gamma\cup\Delta\cup\{\f\}\subseteq \fm$ we have:
\begin{itemize}
\item $\Gamma\cup\{\f\}\vdash_\LOG \f$ \hfill (Reflexivity)
\item If $\Gamma\vdash_\LOG \f$ and for each $\gamma\in\Gamma$ we have $\Delta\vdash_\LOG \gamma$, then $\Delta  \vdash_\LOG \f$ \hfill (Cut)
\end{itemize}

By \emph{theory} we understand simply \emph{a set of formulas}; for each theory $T$ we define the set of its consequences in logic $\LOG$ as $C_\LOG(T) = \{\f\mid T\vdash_\LOG \f \}$.

As the final ingredient we need to define the notion of \emph{essential undecidability} of a theory. As the underlying logic can vary, we  have to be a bit more careful and formalistic in our definitions of (un)decidability, extension and consistency now (so that we can be a bit looser going forward).\footnote{The abstract framework we develop here can be seen as a still less general version of a similar framework developed by Smullyan (1961) for abstract reasoning about undecidability and incompleteness. We could use his framework of \emph{representation systems} in order to present our results by fixing for one of our arithmetic theories $U$, a representation where $S$ is the set of $\Sigma_1$-formulae in the language of $U$, $T=C_\LOG(U)$, and $R=\{\f\mid U\vdash_{\LOG}\neg\f\}$. This would put our work into Smullyan's context, but we follow our current mode of presentation here because we do not need the additional generality provided by Smullyan's approach here. Thanks are due to an anonymous referee for pointing out this connection.} When we speak about the decidability of a theory $T$, we actually speak about the decidability of the set $C_\LOG(T)$, i.e., questions of decidability depends on the logic in question (e.g.\ all theories are trivially decidable in the inconsistent logic $\text{Inc} = \Pow(\fm)\times \fm$). Analogously when we say that a theory $T$ strengthens a theory $S$ we do not speak about simple subsethood but about the fact that $T$ proves all axioms of $S$ in $\LOG$, i.e., $S\subseteq C_\LOG(T)$. Finally the consistency depends on the logic in question and on the class of $\Sigma_1$-formulas: we say that a theory $T$ is \emph{$\Sigma_1$-consistent in $\LOG$} if for no $\Sigma_1$-formula $\f$ we have $T\vdash_\LOG \f$ and $T\vdash_\LOG \neg\f$ (note the our notion of consistency implies non-triviality, i.e., that there is a $\f$ such that $T\nvdash_\LOG \f$, and in sufficiently strong logics the converse is true as well; furthermore in a sufficiently strong arithmetical theory it is equivalent with $T\nvdash_\LOG \0=\1$).

Therefore we should speak about $\Sigma_1$-consistency, decidability, and strengthening \emph{in} $\LOG$. To simplify matters, whenever the logic is known from the context we assume that all subsequent uses of these three notions are parameterized by the logic in question. We also omit the prefix $\Sigma_1$, when this is clear from the context. With this convention in place, we can give the following definition analogously to the classical case:

\begin{definition}
A theory $T$ is \emph{essentially undecidable in~$\LOG$} if it is consistent and each consistent theory strengthening $T$ is undecidable.
\end{definition}

The next proposition shows that our notion of essential undecidability is a particularly robust version of essential undecidability as it
is preserved not only in stronger theories but also in stronger logics. The statement of  this proposition is made somewhat intricate to accommodate for the possibility of a stronger logic being defined over a bigger set of formulas.

\begin{prop}\label{p:UndecUpward}
Let $\LOG$ be a logic over a set of formulas $\fm$ and $\LOG'\supseteq \LOG$ a logic over a set of formulas $\fm'\supseteq\fm$ such that all $\Sigma_1$-formulas of $\fm$ are $\Sigma_1$-formulas of $\fm'$. Furthermore, assume that $T$ is an essentially undecidable theory in~$\LOG$. Then any theory $S$ of $\LOG'$ which is consistent and strengthens $T$ (seen as a $\LOG'$-theory) is essentially undecidable in~$\LOG'$.
\end{prop}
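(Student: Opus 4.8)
The plan is to verify the two clauses of essential undecidability for $S$ in $\LOG'$ straight from the definition. Consistency of $S$ in $\LOG'$ is assumed outright, so the whole task reduces to showing that an arbitrary $\LOG'$-theory $S'$ which is consistent and strengthens $S$ is undecidable in $\LOG'$. The first thing I would record is that Reflexivity and Cut make $C_\LOG$ and $C_{\LOG'}$ genuine closure operators (increasing, monotone, idempotent), so that ``strengthens'' is transitive: from the hypothesis $T\subseteq C_{\LOG'}(S)$ together with $S\subseteq C_{\LOG'}(S')$ one gets $C_{\LOG'}(S)\subseteq C_{\LOG'}(S')$ and hence $T\subseteq C_{\LOG'}(S')$. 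Thus $S'$ already strengthens $T$ when viewed in $\LOG'$, and $S$ itself may be set aside from here on.

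The key construction is to pull $S'$ back to a $\LOG$-theory. I set $W = C_{\LOG'}(S')\cap\fm$, the set of $\fm$-formulas that $S'$ proves in $\LOG'$. Since $T\subseteq\fm$ and $T\subseteq C_{\LOG'}(S')$, we have $T\subseteq W\subseteq C_\LOG(W)$, so $W$ strengthens $T$ in $\LOG$. The more delicate point is that $W$ is $\Sigma_1$-consistent in $\LOG$: if $W\vdash_\LOG\f$ and $W\vdash_\LOG\neg\f$ for some $\Sigma_1$-formula $\f$ of $\fm$, then because $\vdash_\LOG\,\subseteq\,\vdash_{\LOG'}$ and every element of $W$ is a $\LOG'$-consequence of $S'$, Cut yields $S'\vdash_{\LOG'}\f$ and $S'\vdash_{\LOG'}\neg\f$; as $\f$ is by hypothesis also a $\Sigma_1$-formula of $\fm'$ and its negation is the same designated formula, this contradicts the consistency of $S'$ in $\LOG'$.

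With $W$ consistent and strengthening $T$, essential undecidability of $T$ in $\LOG$ gives that $C_\LOG(W)$ is undecidable. It then remains to transport this undecidability upward. I would show $C_\LOG(W)=C_{\LOG'}(S')\cap\fm$: the inclusion $\supseteq$ is just $W\subseteq C_\LOG(W)$, and $\subseteq$ follows again from $\vdash_\LOG\,\subseteq\,\vdash_{\LOG'}$ together with Cut. Consequently, were $C_{\LOG'}(S')$ decidable, then so would be its intersection with $\fm$ — that is, $C_\LOG(W)$ — contradicting the previous step. This forces $S'$ to be undecidable in $\LOG'$, which completes the argument.

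The main obstacle, and the step I would handle most carefully, is the downward transfer of consistency: it is exactly here that both structural hypotheses are used, namely that $\vdash_\LOG$ is contained in $\vdash_{\LOG'}$ and that the designated $\Sigma_1$-formulas of $\fm$ (together with their negations) survive unchanged as $\Sigma_1$-formulas of $\fm'$. The only other point requiring attention is the concluding reduction, which presupposes that $\fm$ is a recursive subset of $\fm'$ under the fixed coding, so that intersecting a decidable set with $\fm$ preserves decidability; this is the benign syntactic assumption that recognising a formula of the smaller system is effective.
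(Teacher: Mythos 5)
Your proof is correct and follows essentially the same route as the paper: your set $W = C_{\LOG'}(S')\cap\fm$ is exactly the paper's auxiliary theory $V=\{\x\in\fm\mid U\vdash_{\LOG'}\x\}$, and both arguments show this set is $\LOG$-closed, consistent, and strengthens $T$, then invoke essential undecidability of $T$ in $\LOG$ and transfer undecidability back to $C_{\LOG'}(S')$. If anything, you are more explicit than the paper on two points it leaves implicit---that the $\Sigma_1$-preservation hypothesis is what makes the consistency transfer work, and that the final reduction needs $\fm$ to be a recursive subset of $\fm'$.
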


\begin{proof}
It suffices to show that  any theory $U$ which is consistent in $\LOG'$ and strengthens $S$ in $\LOG'$ is undecidable in $\LOG'$. Define a set $V = \{\x \in \fm\mid U \vdash_{\LOG'} \x\}$ and observe $V = C_{\LOG}(V)$: indeed if $V \vdash_\LOG \delta$ implies $V \vdash_{\LOG'} \delta$ and for each $\x \in V$ we have $U\vdash_{\LOG'} \x$ and so due to the (cut) rule of $\LOG'$ we have $U\vdash_{\LOG'} \delta$, i.e., $\delta \in V$.

Therefore $V$ is consistent in $\LOG$ (otherwise we would obtain contradiction with the assumption that $U$ is consistent in $\LOG'$) and strengthens $T$ in $\LOG$ (actually $T \subseteq U$) and so by essential undecidability of $T$ in~$\LOG$ we know that $V$ is undecidable. Since $\x\in V$ iff $\x\in C_{\LOG'}(U)$, then $C_{\LOG'}(U)$ is undecidable as well.
\end{proof}

When we establish that a theory $T$ of a logic $\LOG$ is a Rosser theory, defined below, the proof of the essential undecidability theorem can proceed as in the classical setting. We present the proof in some detail so it is obvious that no additional properties of $\LOG$ are needed.

\begin{definition}[Rosser Theories]
We say that a theory $T$ is \emph{Rosser} in logic $\LOG$ if for each pair of disjoint recursively enumerable sets $A,B \subseteq \mathbb{N}$ there is a recursive series of $\Sigma_1$-formulas $\f_n$ which \emph{strongly separate} $A$ from $B$, that is:
\end{definition}

\begin{itemize}
\item $n\in A$ \emph{implies} $T \vdash_\LOG \f_n$.
\item $n\in B$ \emph{implies} $T \vdash_\LOG \neg\f_n$.
\end{itemize}

\begin{thm}[Rosser Theorem]\label{t:RosGen}
Let $\LOG$ be a logic and $T$ an $\LOG$-consistent Rosser theory. Then $T$ is essentially undecidable in $\LOG$.
\end{thm}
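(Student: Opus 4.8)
The plan is to prove this by a standard reduction argument, showing that if some consistent theory strengthening $T$ were decidable, we could decide membership in a recursively inseparable pair of sets, which is impossible. First I would recall the key computability-theoretic ingredient: there exist two disjoint recursively enumerable sets $A, B \subseteq \mathbb{N}$ that are \emph{recursively inseparable}, i.e., there is no recursive set $C$ with $A \subseteq C$ and $B \cap C = \emptyset$. (These are classical; e.g.\ the sets of indices of machines halting with output $0$ versus output $1$.)

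The main argument proceeds as follows. Let $S$ be any consistent theory strengthening $T$ in $\LOG$; I must show $S$ is undecidable, i.e., that $C_\LOG(S)$ is not recursive. Suppose for contradiction that $C_\LOG(S)$ were decidable. Since $T$ is Rosser, for the fixed recursively inseparable pair $A, B$ there is a recursive sequence of $\Sigma_1$-formulas $\f_n$ strongly separating $A$ from $B$, so that $n \in A$ implies $T \vdash_\LOG \f_n$ and $n \in B$ implies $T \vdash_\LOG \neg\f_n$. Because $S$ strengthens $T$, meaning $T \subseteq C_\LOG(S)$, and because the consequence relation satisfies (Cut), everything $T$ proves is also provable in $S$: from $T \vdash_\LOG \f_n$ and $S \vdash_\LOG \gamma$ for every $\gamma \in T$, the (Cut) rule yields $S \vdash_\LOG \f_n$, and similarly $S \vdash_\LOG \neg\f_n$ whenever $n \in B$. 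I would then define the candidate separating set $C = \{\, n \mid \f_n \in C_\LOG(S) \,\}$. Since $n \mapsto \f_n$ is recursive and $C_\LOG(S)$ is assumed decidable, $C$ is recursive. By the transfer of provability just established, $A \subseteq C$; and consistency of $S$ gives $B \cap C = \emptyset$, since if $n \in B$ then $S \vdash_\LOG \neg\f_n$, so $S \vdash_\LOG \f_n$ would violate the $\Sigma_1$-consistency of $S$ (here $\f_n$ is a $\Sigma_1$-formula, which is exactly why the consistency notion is parameterized by the $\Sigma_1$-formulas). Thus $C$ recursively separates $A$ from $B$, contradicting their recursive inseparability.

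Finally I would check the bookkeeping required by the definition of essential undecidability: $T$ is consistent by hypothesis, and the above shows every consistent theory strengthening $T$ is undecidable, which is precisely what the definition demands.

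The step I expect to require the most care is the use of consistency to conclude $B \cap C = \emptyset$. One must verify that the abstract notion of $\Sigma_1$-consistency in $\LOG$ genuinely forbids $S \vdash_\LOG \f_n$ and $S \vdash_\LOG \neg\f_n$ simultaneously for the $\Sigma_1$-formula $\f_n$, and that $S$ inherits consistency appropriately. The deliberate generality of the framework, where $\neg\f$ is mere notation and not every formula is $\Sigma_1$, means this clause must be invoked exactly as stated rather than through any classical reasoning about contradiction; I would emphasize that no property of $\LOG$ beyond (Reflexivity) and (Cut), together with the recursiveness of the map $n \mapsto \f_n$, is used anywhere in the argument, which is the whole point of isolating the Rosser condition.
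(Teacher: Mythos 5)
Your proposal is correct and matches the paper's own proof in all essentials: both fix a recursively inseparable pair of disjoint r.e.\ sets, use the Rosser property plus (Cut) to transfer $\f_n$ and $\neg\f_n$ from $T$ to any consistent strengthening $S$, and observe that the set $\{n \mid S \vdash_\LOG \f_n\}$ would be a recursive separator if $C_\LOG(S)$ were decidable (the paper states this directly via reducibility rather than by explicit contradiction, a purely cosmetic difference). Your closing remark that only (Reflexivity), (Cut), the recursiveness of $n \mapsto \f_n$, and $\Sigma_1$-consistency are used is exactly the point the paper itself emphasizes.
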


\begin{proof}
First recall that from recursion theory (see e.g. Theorem~6.24 of Monk (1976)) that we know that there are disjoint r.e.\ sets $A,B \subseteq \mathbb{N}$ such that each $X\supseteq A$ such that $X\cap B = \emptyset$ is not recursive. Let $\f_n$ be the series of $\Sigma_1$-formulas guaranteed strongly separating $A$ and $B$.

\pagebreak

Consider a consistent theory $S$ strengthening $T$, define $X = \{n \mid S \vdash \f_n\}$, and notice that:
\begin{itemize}
\item $n\in A$ implies $T \vdash_\LOG \f_n$ and so, due to the (cut) rule, $S \vdash \f_n$, which entails $X\supseteq A$.
\item $n\in B$ implies $T \vdash_\LOG \neg\f_n$ and so $S \vdash \neg\f_n$, which entails $X\cap B = \emptyset$ (otherwise $S$ would not be consistent).
\end{itemize}
Thus $X$ cannot be recursive and as it is clearly recursively reducible to $C_\LOG(S)$, $S$ cannot be decidable in~$\LOG$.
\end{proof}

This structural argument leaves open the question of which logical principles are needed to prove that some theory $T$ is Rosser in $\LOG$, which is where our work really begins.

\section{A weaker logic}\label{s:logic}

The minimal logic in which we prove that $R^\backsim$ is Rosser will be defined as a natural first-order extension of a particular propositional  non-classical logic. We take \emph{propositional} logics to be substitution-invariant consequence relations over a set of propositional formulas given by a \emph{propositional} language $\lang{L}$ (a set of connectives with arities). It is well-known that each such logic can be presented by means of a Hilbert style proof system consisting of axiom and rule schemata (we use the $\wdash$ symbol to separate premises of a rule from its conclusion).


Our basic propositional language $\mathfrak{L}_0$ consists of three binary connectives (implication $\to$, (lattice) conjunction $\wedge$ and (lattice) disjunction $\vee$) and a propositional constant $\bot$.\footnote{Note that in our weaker logic (axiomatised below) we do not include the axiom $\bot\rightarrow\f$, so though we use this suggestive notation, for now $\bot$ is merely a propositional constant (however later in Section~\ref{s:stronger-logic}, we will will work with the logic \DSL\ obeying this additional axiom, in which it will become a genuine \emph{falsum} constant).} Negation $\neg$ and equivalence $\leftrightarrow$ are defined in the following standard way:
$$
\neg\f:=\f\to\bot \qquad\qquad \f\leftrightarrow\p:=(\f\to\p)\wedge(\p\to\f).
$$

As usual, we assume that $\neg$ has the highest binding power, followed by $\wedge$ and $\vee$, and finally $\to$ and $\leftrightarrow$ have the lowest. Our basic propositional logic in the language $\lang{L}_0$, which we denote as $\Log$, is given by the following Hilbert style proof system:\footnote{Note the we use the same symbol for two closely related yet different axioms, we can afford this slight abuse of language as in any given formal proof it it will be clear which of them we are using. We use the same convention for some other upcoming axiom/theorems/rules.}

\medskip

\begin{tabular}{l@{\quad}l@{\qquad\quad}l@{\quad}l}
 (identity) & $\f\to\f$ & (weakening) & $\f\wdash \p\to\f$ 
\\ ($\wedge$elim) & $\f\wedge\p\to\f$  &  (MP) & $\f,\f\to\p\wdash\p$
\\ ($\wedge$elim) & $\f\wedge\p\to\p$  &  (assertion) & $\f\wdash(\f\to\p)\to\p$
\\ ($\vee$intro)& $\f\to \f\vee\p$  & (trans) & $\f\to\p,\p\to \x\wdash\f\to\x$
\\ ($\vee$intro)& $\p\to \f\vee\p$ & (morg) &  $\neg \f \vee \neg \p \wdash \neg (\f \wedge \p)$
\\ && ($\wedge$intro) & $\x\to\f, \x\to\p \wdash \x\to\f\wedge\p$
\\ && ($\vee$elim)& $\f\to\x, \p\to\x \wdash \f\vee\p\to\x$
\end{tabular}

\

\noindent The following theorems and derived rules are easily shown to be derivable in $\Log$:\footnote{Associativity is a simple consequence of ($\vee$intro), ($\vee$elim) and (trans); adjunction follows using (weakening) for $\p$ being any theorem and ($\wedge$intro) together with (MP); and finally (dni) is an instance of (assertion).}

\medskip

\begin{tabular}{l@{\quad}l}
(assoc) & $\f\vee(\p\vee\x) \leftrightarrow (\f\vee\p)\vee\x$
\\ (adj) & $\f,\p\wdash\f\wedge\p$
\\ (dni) & $\f \wdash \neg \neg \f$
\end{tabular}

\begin{remark}\label{comparinglogics}
$\Log$ extends lattice logic by the rules (morg), (weakening), and (assertion). These additional rules are chosen to fulfill specific tasks in fleshing out the structural argument for our version of $R^\backsim$. So if $\Log$ looks somewhat artificial, that's because it is. However, it should be noted that it is a sublogic of many well known systems, most notably classical and intuitionistic logic, as well as H\'ajek's {\rm BL} and (related to the stronger system we'll present later in Section~\ref{s:stronger-logic}) the non-distributive, non-associative Lambek calculus with weakening in $\lang{L}_0$, when this is presented with (assertion) as a rule (related systems to this are discussed, for instance, in Galatos et al (2007)). In fact, it is easy to see that $\Log$ is a proper sublogic of all these logics.
\end{remark}

Now we are ready to introduce first-order logics. We present only the syntactical aspects, excepting where we consider a very special model (namely, the natural numbers with a classical interpretation of the vocabulary). Let us fix a propositional logic $\logic{L}$ expanding $\Log$ (thus in particular, $\logic{L}$ has a propositional language which contains $\lang{L}_0$). Our notion of first-order language is standard, i.e., it is given by a set of function and predicate symbols with a special binary predicate symbol $=$ for equality (we write $t\neq s$ for $\neg(t = s)$).  Then \emph{terms;} \emph{atomic formulas,} and \emph{formulas} are built up as usual. In addition, the notions of free/bounded variable, substitutability, sentence, etc.\/ are defined as usual.

The first-order logic $\plogic{L}$, for a given predicate language, is axiomatised by the substitutional instances of all axioms/rules of $\logic{L}$ (i.e., formulas resulting by replacing atoms by first-order formulas) and the following additional axiom/rule schemata (we assume that $t$ substitutable for $x$ in $\f$ and $x$ not free in $\x$):

\smallskip


\begin{flushleft}
\begin{tabular}{l@{\ \ }l}
($\forall$ins) & \;$\All{x}\f(x) \to \f(t)$                 
\\ ($\exists$intro) & \;$ \f(t)\to\Exi{x}\f(x) $        
\\ ($\forall$intro) & \;$\x\to\p\wdash\x\to(\forall x)\p$      
\\ ($\exists$elim) & \;$\p \to \x\wdash \Exi{x}\,\p \to \x$   
\\    (id) &  \;\;$x=x$
\\ (com) &  \;\;$x=y \to y = x$
\\   (trans) & \; $x=y \to (y = z \to x = z)$
\\  ($=$prin) &\;  $x=y\to t(x) = t(y)$
\\  ($=$prin) & \; $x=y\to(\f(x)\to\f(y))$
\end{tabular}
\end{flushleft}

\medskip

\noindent We can easily establish the following auxiliary derived rules:\footnote{To derive (aux), assuming $\f(t)$, use (assertion) to obtain $(\f(t)\to\f(x))\to\f(x)$ and so ($=$prin) and (trans) complete the proof.}

\smallskip

\begin{flushleft}
\begin{tabular}{l@{\quad}l}
(aux) & $\f(t)\wdash x=t \to\f(x)$ \\
(gen) & $\f\wdash(\forall x)\f$
\end{tabular}
\end{flushleft}

\section{Main result}\label{s:MainResult}

In order to apply the structural argument presented in Section~\ref{s:StructProof} we need to fix a logic over a set of formulas, identify the $\Sigma_1$-formulas, define a theory, show that it is consistent and Rosser.

We will work with the logic $\PLog$ over the set of formulas for the predicate language of Grzegorczyk's arithmetic, i.e., the language with constant $\0$, unary function symbol $S$, binary predicate symbols $=$ and $\leq$, and ternary predicate symbols $A$ and $M$. We define the $n$th numeral $\on$ as usual: $\on= S \stackrel{n}{\dots}S(\0).$ 

\pagebreak

$\Sigma_1$-formulas be those of the form $\f = \Exi{x} \p$ for some $\Delta_0$-formula $\p$, where $\Delta_0$-formulas are those arithmetical formulas where all quantifiers are bounded, i.e., are of the form:\footnote{Let us stress that this definition of bounded quantification is intended for the classical arithmetical formulas; the bounded quantifiers in non-classical logics may be (and often are) defined using implication and strong conjunction; but as we have no need for such quantifiers in the paper, no confusion should arise.}
\begin{align*}
  \All{x \leq y}\f   & = \All{x}(\neg(x\leq y)\vee \f)\\
  \Exi{x \leq y}\f   & = \Exi{x}(x\leq y\wedge \f)
\end{align*}
As expected the negation of a $\Sigma_1$-formula $\f$ will be the formula $\neg\f$. Now we can formally present the theory $R^\backsim$ which we mentioned in the introduction:\footnote{Note that because of (assoc), we may state $(R^\backsim4)$ with no fixed association.}

\smallskip

\begin{flushleft}
\begin{tabular}{l@{\ \ }l@{\quad}l}
$(R^\backsim1)$ &\; $A(\om,\on,x)\leftrightarrow \overline{m+n}=x$ &\; for any $n$ and $m$
\\ $(R^\backsim2)$ &\; $M(\om,\on,x)\leftrightarrow \overline{m \cdot n}=x$ &\; for any $n$ and $m$
\\ $(R^\backsim3)$ &\; $\om\neq \on$ & \;\:for $m\neq n$
\\ $(R^\backsim4)$ &\; $x\leq \on \leftrightarrow (x=\0 \vee x=\1 \vee \dots \vee x=\on)$ &\; for any $n$
\\ $(R^\backsim5)$ &\; $x\leq \on \vee \on\leq x$ &\; for any $n$
\\ $(R^\backsim6)$ &\; $x\leq \on \vee \neg(x\leq \on)$ &\; for any $n$
\end{tabular}
\end{flushleft}

\smallskip

Our goal in this section is to establish essential undecidability of $R^\backsim$ in $\PLog$. Thanks to Theorem~\ref{t:RosGen} we know that it suffices to prove that it is $\PLog$-consistent and Rosser.

By $N$ we denote the set of natural numbers and by $\mathbb{N}$ we denote the standard model of the natural numbers in our arithmetical language, i.e., the structure with constant $\0$ interpreted as $0$, $S$ interpreted so that $S(k) = k+1$, $\leq$ interpreted by the usual order of natural numbers, and $A$ and $M$ as:
\begin{align*}
A^{\mathbb{N}}(k,l, m)  & \text{ iff } k+l = m \\
M^{\mathbb{N}}(k,l, m)  & \text{ iff } k \cdot l = m.
\end{align*}

In this model we can interpret all connectives and quantifiers of $\PLog$ in a fully classical way (so, for instance, $\f\rightarrow\p$ should be interpreted as $\neg\f\lor\p$ for Boolean $\neg$). For a formula $\f(\vectn{x})$ with free variables $\vectn{x}$, we write:

\begin{itemize}
\item $\mathbb{N}\models\f(\vectn{k})$ if $\f$ is satisfied in $\mathbb{N}$ when variables $x_i$ are evaluated as $k_i$
\item $\mathbb{N}\models\f$ if $\mathbb{N}\models\f(\vectn{k})$ for each $\vectn{k}\in N$.
\end{itemize}

Note that a numeral $\on$ is interpreted in $\mathbb{N}$ as the number $n$ and so we have $\mathbb{N}\models\f(\vectn{k})$ iff $\mathbb{N}\models\f(\overline{k_1}, \dots, \overline{k_n})$.

It is easy to see that the structure $\mathbb{N}$ can be interpreted as a model of $R^{\backsim}$ in $\PLog$; formally speaking we can prove the following (as in the rest of this section we work in the logic $\PLog$ only, we omit it as a subscript of $\vdash$):

\begin{prop}[Soundness]\label{p:consR}
For each formula $\f$, $R^{\backsim} \vdash \f$ implies $\mathbb{N}\models \f$.
\end{prop}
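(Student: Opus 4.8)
The plan is to establish a standard soundness argument: I will show that the set of formulas satisfied in the standard model $\mathbb{N}$ forms a closure that contains every axiom of $R^\backsim$ and is closed under all the rules of $\PLog$. Since $\PLog$ is presented by a Hilbert-style system (the substitutional instances of the propositional axioms/rules of $\Log$ together with the quantifier and equality axioms/rules), it suffices to verify two things: that each axiom schema of $\PLog$, and each axiom of $R^\backsim$, is true in $\mathbb{N}$; and that each rule of $\PLog$ preserves truth in $\mathbb{N}$. The key observation making this routine is that, as stated in the excerpt, all connectives and quantifiers of $\PLog$ are interpreted in $\mathbb{N}$ in the fully classical way (so $\to$ is material implication, $\bot$ is false, $\neg$ is Boolean negation). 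Hence the propositional fragment reduces to checking that the $\Log$-axioms and rules are classically valid, which they manifestly are, since Remark~\ref{comparinglogics} already notes that $\Log$ is a sublogic of classical logic.

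First I would set up the semantic backdrop precisely: define, for an arbitrary formula $\f$, what $\mathbb{N}\models\f$ means under a classical reading, and note that truth is preserved under the classical clauses for $\wedge,\vee,\to,\bot$ and the quantifiers. Then I would dispatch the propositional axioms and rules of $\Log$ — (identity), ($\wedge$elim), ($\vee$intro), (weakening), (MP), (assertion), (trans), (morg), ($\wedge$intro), ($\vee$elim) — by noting each is a classical tautology or a classically truth-preserving inference; because these hold for every Boolean valuation, they hold in particular for the valuation induced at each assignment of values to the free variables in $\mathbb{N}$. Next I would handle the first-order machinery: the equality axioms (id), (com), (trans), and the two ($=$prin) schemata hold because $=$ is interpreted as genuine identity on $N$ and the interpretations of $S$, $A$, $M$, $\leq$ are well-defined; the quantifier axioms ($\forall$ins), ($\exists$intro) and rules ($\forall$intro), ($\exists$elim) are the usual classically sound quantifier principles, with the side condition that $x$ is not free in $\x$ ensuring that ($\forall$intro)/($\exists$elim) preserve truth across all assignments.

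The remaining and most substantive step is verifying that the six axioms of $R^\backsim$ are true in $\mathbb{N}$. For ($R^\backsim$1) and ($R^\backsim$2) I would use that $\overline{m}$ is interpreted as $m$ and that $A^{\mathbb{N}}, M^{\mathbb{N}}$ encode addition and multiplication, so $A(\om,\on,x)$ holds iff $x=m+n$ iff $\overline{m+n}=x$ (and similarly for $M$); the biconditionals are then classically valid. For ($R^\backsim$3), $\om\neq\on$ when $m\neq n$ holds because distinct numerals name distinct numbers. For ($R^\backsim$4) I would check that $k\leq n$ in $\mathbb{N}$ iff $k$ is one of $0,\dots,n$, matching the finite disjunction on the right. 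Finally ($R^\backsim$5) and ($R^\backsim$6) are true since $\leq$ is a total order on $N$ and the excluded-middle instance holds under the Boolean reading of $\neg$.

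I expect no genuine obstacle here: because every logical constant is read classically in $\mathbb{N}$, soundness collapses to the observation that $\PLog$ is contained in classical logic plus the transparent fact that the $R^\backsim$ axioms describe true arithmetical relations in the standard model. The one point demanding a little care, rather than difficulty, is the bookkeeping for the rule-preservation clauses (especially (assertion), (morg), and the open-formula rules ($\forall$intro), ($\exists$elim)): one must phrase truth-in-$\mathbb{N}$ uniformly over all assignments so that the rules, which are schematic over formulas with free variables, are seen to preserve the ``true under every assignment'' notion $\mathbb{N}\models\f$ rather than mere truth at a single assignment.
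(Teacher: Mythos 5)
Your proposal is correct and is exactly the argument the paper intends: the paper states this proposition without proof (prefacing it with ``it is easy to see that the structure $\mathbb{N}$ can be interpreted as a model of $R^{\backsim}$''), and the routine justification is precisely your induction on derivations --- every axiom and rule of $\PLog$ is classically sound because all connectives and quantifiers are read classically in $\mathbb{N}$ and $\Log$ is a sublogic of classical logic (Remark~\ref{comparinglogics}), while the six $R^\backsim$ axioms express true arithmetical facts about the standard interpretations of $S$, $A$, $M$, and $\leq$. Your closing remark about phrasing soundness as preservation of truth under every assignment (needed for ($\forall$intro) and ($\exists$elim)) is the one genuinely necessary bookkeeping point, and you handle it correctly.
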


For no $\Sigma_1$-formula $\f$ do we have $\mathbb{N}\models \f$ and $\mathbb{N}\models \neg\f$, and this entails that $R^{\backsim}$ is $\PLog$-consistent. As the next step we establish the converse claim for $\Sigma_1$-sentences (known as $\Sigma_1$-completeness), i.e., for each $\Sigma_1$-sentence $\f$ we have $\mathbb{N} \models \f$ only if\/ $R^{\backsim} \vdash \f$. We prove a stronger statement.

\begin{thm}[ $\Sigma_1$-completeness]\label{t:Sigma-comp}
For each $\Sigma_1$-formula $\f(x_1, \dots, x_n)$, we have:
$$
\mathbb{N} \models \f(\vectn{k})\qquad\text{iff}\qquad R^{\backsim} \vdash \f(\overline{k_1}, \dots, \overline{k_n)}.
$$
\end{thm}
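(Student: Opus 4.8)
The plan is to prove both directions by induction on the structure of the $\Sigma_1$-formula $\f$, reducing the $\Sigma_1$ case to the $\Delta_0$ case and then handling $\Delta_0$-formulas by induction on their complexity. Since by assumption $\PLog$ interprets soundly in $\mathbb{N}$ (Proposition~\ref{p:consR}), the direction $R^{\backsim}\vdash\f(\overline{k_1},\dots,\overline{k_n})$ implies $\mathbb{N}\models\f(\vectn{k})$ is already free from soundness; so the real content is the forward direction, namely that truth in $\mathbb{N}$ of a $\Sigma_1$-instance is actually \emph{provable} in $R^{\backsim}$ using only the weak logic $\PLog$. I would first reduce to sentences: since each free variable is instantiated by a numeral $\overline{k_i}$, the formula $\f(\overline{k_1},\dots,\overline{k_n})$ is a $\Sigma_1$-sentence, so it suffices to prove the biconditional for $\Sigma_1$-sentences, and for these the existential case $\f=\Exi{x}\p$ follows from the $\Delta_0$ case via ($\exists$intro): if $\mathbb{N}\models\Exi{x}\p$ then some witness $k$ makes $\mathbb{N}\models\p(\ok)$, and by the inductive ($\Delta_0$) claim $R^{\backsim}\vdash\p(\ok)$, whence $R^{\backsim}\vdash\Exi{x}\p$.

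The core is therefore a lemma stating that for every $\Delta_0$-sentence $\p$ (allowing numeral parameters), $\mathbb{N}\models\p$ iff $R^{\backsim}\vdash\p$, \emph{and} simultaneously $\mathbb{N}\not\models\p$ iff $R^{\backsim}\vdash\neg\p$. I would prove these two statements together by a single induction on the complexity of $\p$, because the negative half is needed to drive the induction through implication and negation in the weak logic (one cannot rely on classical duality). The atomic base cases are handled directly by the axioms: the atomic sentences are equalities $\om=\on$, order statements $\om\leq\on$, and the relational atoms $A(\om,\on,\ok)$ and $M(\om,\on,\ok)$. For equalities, ($R^\backsim$3) supplies the negative instances $\om\neq\on$ for $m\neq n$ and $(id)$-style reasoning (via $x=x$ and the numeral $\on$) gives $\om=\on$; for order, ($R^\backsim$4) reduces $\om\leq\on$ to a disjunction of equalities already settled, and the excluded-middle axiom ($R^\backsim$6) together with ($R^\backsim$4) settles $\neg(\om\leq\on)$ in the false case; for $A$ and $M$, the biconditional axioms ($R^\backsim$1) and ($R^\backsim$2) translate each relational atom into an equality $\overline{m+n}=\ok$ or $\overline{m\cdot n}=\ok$ whose truth value is already decided, and since these are biconditionals the negative direction comes for free using (morg)/(dni) as needed.

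For the inductive step on the propositional connectives $\wedge$, $\vee$, $\to$ (and hence $\neg$) and the bounded quantifiers, I would use the $\Log$-rules ($\wedge$intro), ($\vee$intro), ($\vee$elim), (MP), (assertion) and (morg) to push provability through, always pairing the positive and negative halves. The bounded quantifiers are the delicate point: by the definitions $\Exi{x\leq y}\p$ and $\All{x\leq y}\p$ expand using $\leq$, and one uses ($R^\backsim$4) to replace the bound $x\leq\on$ by the finite disjunction $x=\0\vee\dots\vee x=\on$, thereby turning a bounded quantifier over a numeral into a finite conjunction or disjunction of instances $\p(\overline{0}),\dots,\p(\on)$, each of which is covered by the induction hypothesis; $(aux)$ lets one pass from $\p(\ok)$ to $x=\ok\to\p(x)$ so the disjunction can be discharged by ($\vee$elim). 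The main obstacle I anticipate is precisely managing the \emph{negative} side of implications and universally bounded quantifiers inside the impoverished logic $\Log$ — in particular obtaining $R^{\backsim}\vdash\neg(\f\to\p)$ when $\f$ is true and $\p$ is false, and $R^{\backsim}\vdash\neg\All{x\leq\on}\p$ from a single false instance — since these steps are trivial classically but here must be wrung out of (morg), (dni), (assertion), and the excluded-middle axiom ($R^\backsim$6), which is exactly why that extra axiom was included.
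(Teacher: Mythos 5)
Your skeleton matches the paper's proof: one direction comes from soundness (Proposition~\ref{p:consR}); the other is an induction on $\Delta_0$-complexity carrying the positive claim and the negative claim ($\mathbb{N}\models\neg\p$ implies $R^\backsim\vdash\neg\p$) simultaneously, with atoms settled by the $R^\backsim$ axioms, bounded quantifiers handled via $(R^\backsim4)$, $(R^\backsim6)$, (aux), and ($\vee$elim), and the $\Sigma_1$ case reduced to the $\Delta_0$ case by a single application of ($\exists$intro). The paper merely organises the negative half differently, as a case split on the shape of the negated formula inside one induction, which is the same bookkeeping as your ``prove both halves together.''

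There is, however, one genuine gap: you admit implication as a connective of $\Delta_0$-formulas and propose to push the induction through it, and that case cannot be completed in $\PLog$. Moreover, it is the \emph{positive} half, not the negative one you single out as the main obstacle, that fails. If $\mathbb{N}\models\f\to\p$ holds because $\f$ is false (with $\p$ also false), the induction hypothesis yields only $R^\backsim\vdash\f\to\bot$; to pass from this to $R^\backsim\vdash\f\to\p$ you would need $\bot\to\p$, i.e.\ ex falso, which is exactly the axiom ($\bot$elim) that $\Log$ deliberately omits (it is only added in the stronger logic $\DSL$ of Section~\ref{s:stronger-logic}). The negative case you worry about is in fact unproblematic: from $R^\backsim\vdash\f$, (assertion) gives $R^\backsim\vdash(\f\to\p)\to\p$, and (trans) with $R^\backsim\vdash\p\to\bot$ yields $R^\backsim\vdash\neg(\f\to\p)$. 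The paper sidesteps the whole issue by keeping $\Delta_0$-formulas implication-free; this is precisely why it defines the bounded universal quantifier as $\All{x\leq y}\f=\All{x}(\neg(x\leq y)\vee\f)$, using negation and disjunction rather than $\to$, so that no implication with a possibly false antecedent ever needs to be proved. Restrict your $\Delta_0$ class accordingly (atoms combined by $\neg$, $\wedge$, $\vee$, and bounded quantifiers in this disjunctive form) and your induction coincides with the paper's and goes through. Your rule attributions are also slightly off in places, though harmlessly so: the false atomic $\leq$ case needs $(R^\backsim3)$, ($\vee$elim), $(R^\backsim4)$, and (trans) rather than $(R^\backsim6)$, which is instead what drives the positive bounded-$\forall$ case, while (morg) is what handles the negated bounded-$\exists$ case.
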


\begin{proof}
One direction is  a consequence of Proposition~\ref{p:consR} and the fact that  $\mathbb{N}\models\f(\vectn{k})$ iff $\mathbb{N}\models\f(\overline{k_1}, \dots, \overline{k_n})$. We prove the converse direction first for $\Delta_0$-formulas by induction on the complexity of $\f$. As we cannot deal with negation directly, the induction step for it will have to take care of the next principal connective/quantifier.

\begin{description}
\item[$\bullet$\quad $\f$ is atomic] First note that all terms are of the form ${S \dots S}(x)$ for some variable $x$, so it suffices to prove the claim for numerals. Observe that (1) if $\mathbb{N} \models n = m$, then  $R^{\backsim} \vdash \on = \om$ by ($=$prin); (2) if $\mathbb{N} \models A(n,m,k)$ (i.e., $n + m = k$), then as before $R^{\backsim} \vdash \overline{n+m} = \ok$ which due to $(R^\backsim1)$ implies
    $R^{\backsim} \vdash  A(\on,\om,\ok)$; (3) analogously $\mathbb{N} \models M(n,m,k)$ implies $R^{\backsim} \vdash M(\on,\om,\ok)$ using $(R^\backsim2)$; and finally (4) from $\mathbb{N} \models m \leq n$, we know that $R^{\backsim} \vdash \om \leq \on$ thanks to axiom $(R^\backsim4)$ for $x = \om$ (as then $\om = \om$ is one of the disjoints on the right-hand side disjunction).

\item[$\bullet$\quad $\f = \p\wedge \x$] From the assumption we obtain $\mathbb{N} \models \p$ and $\mathbb{N} \models \x$. Thus by the induction assumption  $R^{\backsim} \vdash \p$ and  $R^{\backsim} \vdash \x$, and so (adj) completes the proof.

\item[$\bullet$\quad $\f = \p\vee \x$] From  the assumption we obtain $\mathbb{N} \models \p$ or $\mathbb{N} \models \x$. Thus by the induction assumption  $R^{\backsim} \vdash \p$ or $R^{\backsim} \vdash \x$, and so ($\vee$intro) completes the proof.

\item[$\bullet$\quad $\f =  \Exi{y \leq x}\p(y, \vectn{x})$] From  $\mathbb{N} \models \Exi{y \leq k}\p(y, \vectn{k})$ we obtain an $m$ such that
$$
\mathbb{N} \models  m \leq k \qquad \text{and}\qquad \mathbb{N} \models \p(m,\vectn{k}).
$$
Thus by the induction assumption $R^{\backsim} \vdash \om \leq \overline{k}$ and $R^{\backsim} \vdash\p(\om,\overline{k_1}, \dots, \overline{k_n})$  and so (adj) and ($\exists$intro) complete the proof.

\item[$\bullet$\quad $\f = \All{y \leq x}\p(y, \vectn{x})$] From  $\mathbb{N} \models \All{y \leq k}\p(y, \vectn{k})$ we know that for each $m\leq k$ we have $\mathbb{N} \models \p(m, \vectn{k})$. Thus by the induction assumption we obtain,
$$
R^{\backsim} \vdash \p(\om, \overline{k_1}, \dots, \overline{k_n}).
$$
Next we use (aux) to obtain, for $m\leq k$:
$$
R^{\backsim} \vdash y = \om  \to \p(y, \overline{k_1}, \dots, \overline{k_n}).
$$
Thus, using ($\vee$elim), we obtain
$$
R^{\backsim} \vdash (y= \0 \vee y = \1 \vee \dots \vee y =\ok) \to \p(y, \overline{k_1}, \dots, \overline{k_n})
$$
which, using $(R^\backsim4)$, ($\vee$intro), and (trans), entails:
$$
R^{\backsim} \vdash (y \leq \overline{k}) \to \neg(y \leq \overline{k})\vee\p(y, \overline{k_1}, \dots, \overline{k_n}).
$$
As clearly using ($\vee$intro) we also have
$$
R^{\backsim} \vdash \neg(y \leq \overline{k}) \to \neg(y \leq \overline{k})\vee\p(y, \overline{k_1}, \dots, \overline{k_n}).
$$
then by ($\vee$elim) and axiom $(R^\backsim6)$,
$$
R^{\backsim} \vdash \neg(y \leq \overline{k})\vee\p(y, \overline{k_1}, \dots, \overline{k_n}).
$$
and (gen) complete the proof.

\pagebreak

\item[$\bullet$\quad $\f = \neg \p$] We have to distinguish the structure of $\p$:

\begin{description}
\item[$-$\quad $\p$ is atomic] Analogously to the positive case: (1) If $\mathbb{N} \models n \neq m$, then  $R^{\backsim} \vdash \neg (\on = \om)$ by $(R^\backsim3)$; (2) if $\mathbb{N} \models \neg A(n,m,k)$ (i.e., $n + m \neq k$), then as before $R^{\backsim} \vdash  \overline{n+m} \neq \ok$ which due to $(R^\backsim1)$ and (trans) implies $R^{\backsim} \vdash  \neg A(\on,\om,\ok)$; (3) analogously $\mathbb{N} \models \neg M(n,m,k)$ implies $R^{\backsim} \vdash \neg M(\on,\om,\ok)$ using $(R^\backsim2)$; and finally (4) from $\mathbb{N} \models \neg (m \leq n)$, implies for each $k \leq n$ we have $k\neq m$, thus by $(R^\backsim3)$ we obtain
$$
 R^{\backsim} \vdash  \om = \ok \to \bot.
$$
Thus by ($\vee$elim) we obtain
 $$
 R^{\backsim} \vdash  \om = \0 \vee \om = \1 \dots \vee \om = \on \to \bot
$$
and so $(R^\backsim4)$ and and (trans) complete the proof.

\item[$-$\quad $\p = \alpha \wedge \beta$] From $\mathbb{N} \models \neg(\alpha \wedge \beta)$ we obtain $\mathbb{N} \models \neg \alpha$ or  $\mathbb{N} \models \neg \beta$. Thus by the induction assumption we know that $R^{\backsim} \vdash \alpha \to \bot$ or $R^{\backsim} \vdash \beta \to \bot$ and so in both cases ($\wedge$elim) and (trans) completes the proof.

\item[$-$\quad $\p = \alpha \vee\beta$] From $\mathbb{N} \models \neg(\alpha \vee \beta)$ we obtain $\mathbb{N} \models \neg \alpha$ and  $\mathbb{N} \models \neg \beta$. Thus by the induction assumption we know that $R^{\backsim} \vdash  \alpha\to \bot $ and $R^{\backsim} \vdash \beta\to\bot$ and so ($\vee$elim) completes the proof.

\item[$-$\quad $\p = \neg \x$] From $\mathbb{N} \models \neg\neg\x$ we obtain $\mathbb{N} \models \x$. Thus by the induction assumption we know that  $R^{\backsim} \vdash  \x$ and so (dni) completes the proof.

\item[$-$\quad $\p = \Exi{y \leq x}\x(y, \vectn{x})$] From $\mathbb{N} \models \neg \Exi{y \leq k}\x(y, \vectn{k})$ we obtain for each $m \leq k$ that: $\mathbb{N} \models \neg\x(m, \vectn{k})$ . Thus as in the the positive case for $\forall$  we could show that
$$
R^{\backsim} \vdash \neg(y \leq \overline{k})\vee\neg\x(y, \overline{k_1}, \dots, \overline{k_n}).
$$
Thus by (morg) we obtain
$$
R^{\backsim} \vdash y \leq \overline{k}\wedge\x(y, \overline{k_1}, \dots, \overline{k_n}) \to \bot
$$
and so ($\exists$elim) completes the proof.

\item[$-$\quad $\p =  \All{y \leq x}\x(y, \vectn{x})$]  From $\mathbb{N} \models \neg \All{y \leq k}\x(y, \vectn{k})$ we know that there is an $m$ such that
$$
\mathbb{N} \models  \neg\neg( m \leq k) \qquad \text{and}\qquad \mathbb{N} \models \neg\x(m,\vectn{k}).
$$
Thus by the induction assumption  we obtain
$$
R^{\backsim} \vdash \neg(\om \leq \ok)\to \bot \qquad \text{and}\qquad  R^{\backsim} \vdash \x(\om,\overline{k_1}, \dots, \overline{k_n}) \to \bot
$$
which using ($\vee$elim) entails
$$
R^{\backsim} \vdash (\neg(\om \leq \ok)\vee\x(\om, \overline{k_1}, \dots, \overline{k_n})) \to \bot
$$
and so ($\forall$ins) and (trans) complete the proof.
\end{description}
\end{description}

Finally we deal with $\Sigma_1$-formulas. Assume that $\mathbb{N} \models \Exi{y}\p$ for some $\Delta_0$-formula $\p$. Then there is some $m$ such that $\mathbb{N} \models \p(\om, \overline{k_1}, \dots, \overline{k_n})$. Hence, $R^{\backsim} \vdash \p(\om, \overline{k_1}, \dots, \overline{k_n})$ and so ($\exists$intro) completes the proof.
\end{proof}

Now we have all the ingredients to prove that $R^\backsim$ is Rosser in $\PLog$; we will actually show a bit more: there is a single $\Sigma_1$-formula $\f$ such that $\f(\0), \f(\1), \dots $ is the series of formulas witnessing that $R^\backsim$ is Rosser.\footnote{The version of the following lemma concerning extensions of $R$ over classical logic was established by Putnam and Smullyan (1960).}

\begin{lem}\label{l:rosser}
For each pair of disjoint r.e.\ sets $A,B \subseteq \mathbb{N}$, there is a $\Sigma_1$-formula $\f(x)$ such that

\begin{center}
$n\in A$ implies $R^\backsim \vdash \f(\on)$\;\,

$n\in B$ implies $R^\backsim \vdash \neg\f(\on)$
\end{center}

\end{lem}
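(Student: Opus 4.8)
The plan is to build a single Rosser-style witness-comparison formula and then feed the two halves of the separation condition through $\Sigma_1$-completeness (Theorem~\ref{t:Sigma-comp}) and the bounding axioms $(R^\backsim4)$ and $(R^\backsim5)$. First I would recall the standard arithmetisation of recursion theory: since $A$ and $B$ are r.e., I can fix $\Delta_0$-formulas $\alpha(x,y)$ and $\beta(x,y)$ with $n\in A$ iff $\mathbb{N}\models\Exi{y}\alpha(\on,y)$ and $n\in B$ iff $\mathbb{N}\models\Exi{y}\beta(\on,y)$. The point of insisting that the matrices be $\Delta_0$ is that then \emph{every} closed instance $\alpha(\on,\om)$, $\beta(\on,\om)$ together with its negation is \emph{decided} by $R^\backsim$, since the $\Delta_0$ part of the proof of Theorem~\ref{t:Sigma-comp} (including its negation subcases) settles each such sentence one way or the other. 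I then set
\[
\f(x) := \Exi{y}\bigl(\alpha(x,y)\wedge \All{z\leq y}\neg\beta(x,z)\bigr),
\]
which is $\Sigma_1$ because its matrix has only bounded quantifiers; as it is a single formula, the series $\f(\0),\f(\1),\dots$ is automatically recursive, giving the strengthening advertised before the statement.

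For $n\in A$ the argument is semantic and short. Disjointness gives $n\notin B$, so $\beta(n,z)$ fails for every $z$; taking any witness $y_0$ with $\alpha(n,y_0)$ we obtain $\mathbb{N}\models\alpha(\on,\overline{y_0})\wedge\All{z\leq\overline{y_0}}\neg\beta(\on,z)$, hence $\mathbb{N}\models\f(\on)$, and Theorem~\ref{t:Sigma-comp} yields $R^\backsim\vdash\f(\on)$.

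The real work is the case $n\in B$, where I must establish $R^\backsim\vdash\neg\f(\on)$ outright. Fix any witness $y_1$ with $\beta(n,y_1)$, so $R^\backsim\vdash\beta(\on,\overline{y_1})$, while $n\notin A$ gives $R^\backsim\vdash\neg\alpha(\on,\overline{j})$ for every $j$. Writing the matrix of $\f$ as $\alpha(\on,y)\wedge M_y$ with $M_y=\All{z\leq y}\neg\beta(\on,z)$, I would first reduce the goal: by ($\exists$elim) it suffices to refute the matrix for a free $y$, and by (morg) this in turn follows from proving the disjunction $\neg\alpha(\on,y)\vee\neg M_y$. I then establish this disjunction by splitting on the comparability instance $y\leq\overline{y_1}\vee\overline{y_1}\leq y$ of $(R^\backsim5)$. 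On the lower side I refute $\alpha$: using $(R^\backsim4)$ to expand $y\leq\overline{y_1}$ into $y=\0\vee\dots\vee y=\overline{y_1}$ and turning each decided fact $\neg\alpha(\on,\overline{j})$ into $y=\overline{j}\to\neg\alpha(\on,y)$ by (aux), a ($\vee$elim) and (trans) give $y\leq\overline{y_1}\to\neg\alpha(\on,y)$, exactly as in the atomic-negation subcase of Theorem~\ref{t:Sigma-comp}. On the upper side I refute $M_y$: ($\forall$ins) instantiates $M_y$ at $z=\overline{y_1}$ to $\neg(\overline{y_1}\leq y)\vee\neg\beta(\on,\overline{y_1})$, and since $\beta(\on,\overline{y_1})$ is a theorem while $\overline{y_1}\leq y$ holds by the case assumption, (morg) lets me contradict the conjunction $\overline{y_1}\leq y\wedge\beta(\on,\overline{y_1})$, giving $\neg M_y$ on this branch—mirroring the negated bounded-quantifier subcases of Theorem~\ref{t:Sigma-comp}. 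Feeding each branch into the target disjunction by ($\vee$intro) and combining by ($\vee$elim) yields $\neg\alpha(\on,y)\vee\neg M_y$, hence $\neg\f(\on)$.

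I expect the main obstacle to be precisely this last direction. Unlike the $n\in A$ half it cannot be obtained from $\Sigma_1$-completeness, since $\neg\f(\on)$ is not $\Sigma_1$, so it must be assembled by hand inside $\PLog$, where neither ex falso nor a deduction theorem is available. Consequently the classical moves one would reach for—disjunctive syllogism to discharge the refuted disjunct $\neg(\overline{y_1}\leq y)$, and free contraposition—are unavailable, and the derivation must be organised around \emph{disjunctive} goals (so that the ``wrong'' case is absorbed by a harmless disjunct rather than eliminated via $\bot\to\f$) and around the specially added rule (morg), whose role is exactly to license the passage from $\neg\alpha(\on,y)\vee\neg M_y$ to $\neg(\alpha(\on,y)\wedge M_y)$ that classical logic would grant for free. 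Getting the comparability split of $(R^\backsim5)$ to interact correctly with these restricted manoeuvres, so that both branches terminate in the common disjunction without ever discharging $\overline{y_1}\leq y$ illegitimately, is the delicate heart of the proof.
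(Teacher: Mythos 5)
Your $n\in A$ half is fine, and the skeleton of your $n\in B$ half (split on $(R^\backsim5)$, handle the lower side via (aux), $(R^\backsim4)$ and ($\vee$elim), combine branches by ($\vee$elim), finish with (morg) and ($\exists$elim)) has the right shape. But the upper branch is where the proof breaks, and it breaks for reasons that are hard to patch while you keep the classical Rosser formula $\f(x)=\Exi{y}(\alpha(x,y)\wedge\All{z\leq y}\neg\beta(x,z))$. You need the \emph{theorem} $\overline{y_1}\leq y\to\neg M_y$, where $M_y=\All{z\leq y}\neg\beta(\on,z)$. Your derivation of it (i) treats $\overline{y_1}\leq y$ as a ``case assumption'' that is later discharged into an implication --- but $\PLog$ has no deduction theorem, so case reasoning must be conducted entirely as ($\vee$elim) applied to implications with a common consequent, and nothing may be hypothetically assumed; and (ii) applies (morg) to the disjunction $\neg(\overline{y_1}\leq y)\vee\neg\beta(\on,\overline{y_1})$, which is \emph{not} a theorem --- it is available only as the consequent of the ($\forall$ins) instance $M_y\to(\neg(\overline{y_1}\leq y)\vee\neg\beta(\on,\overline{y_1}))$. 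In $\Log$, (morg) is a rule, applicable only to formulas already derived from the theory; the implicational form $\neg\f\vee\neg\p\to\neg(\f\wedge\p)$, which is what you would need in order to compose with the ($\forall$ins) instance by (trans), is precisely one of the extras of the stronger logic $\DSL$ and is not derivable in $\Log$. Every natural repair --- contraposition, suffixing, exportation $\f\wedge\p\to\x\wdash\f\to(\p\to\x)$, ex falso to absorb the refuted disjunct, or distribution of $\wedge$ over $\vee$ --- is likewise a $\DSL$-or-stronger principle. In short: with your choice of $\f$, the $n\in B$ case forces you to introduce the negation of a bounded universal \emph{under a hypothesis}, and $\PLog$ has no mechanism for doing that.

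This is exactly why the paper does not use the classical Rosser formula but the classically equivalent $\f(x)=\Exi{v}\neg(\neg\alpha(x,v)\vee\Exi{u\leq v}\beta(x,u))$. With the matrix packaged as an explicit negation of a disjunction, the $n\in B$ case only requires proving the \emph{positive} disjunction $\neg\alpha(\on,v)\vee\Exi{u\leq v}\beta(\on,u)$ outright as a theorem: the lower branch is your own computation via (aux) and $(R^\backsim4)$, while the upper branch produces the positive disjunct $\Exi{u\leq v}\beta(\on,u)$ from $\overline{y_1}\leq v$ using only (weakening), (identity), ($\wedge$intro) and ($\exists$intro) --- no negation of $M_y$ ever appears. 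The single negation-introduction needed is then one application of the rule (dni) to this theorem, yielding the negated matrix, after which ($\exists$elim) finishes; every rule in the paper's derivation is applied to a theorem, which is the only way rules can legitimately be applied here. (A secondary gap: you assert $\Delta_0$ definitions of $A$ and $B$ directly in the relational language with $A,M$; the recursion-theoretic fact supplies them in the language with function symbols $+,\cdot$, and the paper needs a separate rewriting argument, with a multiset-ordering termination proof, to convert them.)
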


\begin{proof}
From the recursion theory (see e.g. Lindstr\"{o}m (1997), Fact~1.3(b)) we know that $A$ and $B$ are definable using the \emph{classical} $\Sigma_1$-formulas, i.e., formulas in the language with functions $+$ and $\cdot$ instead of predicates $A$ and $M$. Let us show that each `classical' $\Delta_0$-formula is equivalent to a $\Delta_0$-formula in our language (assuming that $\mathbb{N}$ interprets all these symbols).

Let us call the terms and formulas of our language \emph{simple}. An classical atomic formula is \emph{almost-simple} if it is simple or of the form $x = t_1 \circ t_2$, where $\circ$ is either $+$ or $\cdot$, $x$ is a variable and $t_1$ and $t_2$ are simple term. Clearly replacing such a formula by $A(t_1, t_2, z)$ or $M(t_1, t_2, x)$ respectively yields an equivalent formula in our language. So it suffices to show that any `classical' $\Delta_0$-formula is equivalent to a `classical' $\Delta_0$-formula where all atomic formulas are simple or almost-simple (we omit the adjective `classical' from now on). First, we observe the validity of the following three statements of classical logic for $\circ$ being each $+$ or $\cdot$, terms $t, t_1, t_2$ and variables $x, x_1, x_2$ not occurring in those terms:
\begin{itemize}
\item $\mathbb{N}\models  t = t_1 \circ t_2   \leftrightarrow  (\exists x_1\leq t)(\exists x_2\leq t) (t = x_1\circ x_2)$
\item $\mathbb{N}\models  t_1 \circ t_2  \leq t   \leftrightarrow  (\exists x_1\leq t)(\exists x_2\leq t)(x_1\circ x_2 \leq t)$
\item $\mathbb{N}\models  t \leq t_1 \circ t_2   \leftrightarrow  (\exists x_1\leq t)(\exists x_2\leq t) (x_1\leq t_1 \wedge x_2\leq t_2 \wedge t = x_1\circ x_2 )$.
\end{itemize}
Next we note that applying any of these equivalencies to any non-almost-simple atomic subformula of a given $\Delta_0$-formula $\x$ strictly decreases the finite multiset of depths of terms occurring in the formula according to the standard multiset well-ordering.\footnote{A finite multiset over a set $S$ is an ordered pair $\langle S,f \rangle$, where $f$ is a function $f\colon S \to \mathbb{N}$ and $\{ x \in S  \mid  f(x) > 0 \}$ is finite.  If $\le$ is a well-ordering of $S$, then:
\[
\langle S,f \rangle \leq_m \langle S,g \rangle \quad :\Longleftrightarrow\quad \forall x \in S \bigl( f(x) > g(x) \ \Longrightarrow \ \exists y \in S \bigl( y >  x \text{ and } g(y) > f(y)\bigr) \bigr)
\]
is a well-ordering on the set of all finite multisets over $S$, known as the Dershowitz--Manna ordering, for which see Dershowitz and Manna (1979).} Therefore exhaustively applying these equivalencies yields the required equivalent $\Delta_0$-formula with only almost-simple atomic formulas.

Thus we can assume that there are $\Delta_0$-formulas $\alpha(x,v)$ and $\beta(x,v)$ (in our language) such that
\begin{itemize}
\item $n\in A$ iff $\mathbb{N} \models \Exi{v} \alpha(n,v)$.
\item $n\in B$ iff $\mathbb{N} \models \Exi{v} \beta(n,v)$.
\end{itemize}
We define the $\Delta_0$-formula $\p(x)$:
$$
\p(x,v) = \neg(\neg\alpha(x,v) \vee \Exi{u\leq v} \beta(x,u))
$$
and show that the $\Sigma_1$-formula $\f(x) = \Exi{v} \p(x,v)$ has the desired properties.

The first case ($n\in A$) is easy: observe that this entails that $n\notin B$ and so we have not only $\mathbb{N} \models \alpha(n,m)$ for some $m$ but also that $\mathbb{N} \models \neg \beta(n, k)$ for each $k$ and so $\mathbb{N} \models  \neg\Exi{u\leq m} \beta(n,u)$. Therefore $\mathbb{N}\models \f(n)$ and so by Theorem~\ref{t:Sigma-comp} it follows that $R^{\backsim} \vdash \f(\on)$.

The proof of the second case ($n\in B$) is not so direct because $\neg\f$ is not a $\Sigma_1$-formula and so we cannot use $\Sigma_1$-completeness directly. However, because we know that $\mathbb{N} \models  \beta(n, m)$  for some $m$ and that $\mathbb{N} \models \neg \alpha(n, k)$ for each $k$ we can use it to obtain $R^{\backsim} \vdash \beta(\on,\om)$ and $R^{\backsim} \vdash \neg\alpha(\on,\ok)$. Using these facts we prove that
\begin{eqnarray}
R^{\backsim} \vdash v \leq \om \to \neg\alpha(\on,v)  \\[1ex]
R^{\backsim} \vdash \om \leq v \to \Exi{u\leq v} \beta(\on,u).
\end{eqnarray}
and after establishing these two claims the proof that $R^\backsim \vdash \neg\f(\on)$ easily follows: indeed using them together with ($\vee$intro), ($\vee$elim), and $(R^\backsim5)$ we obtain
$$
R^{\backsim} \vdash  \neg\alpha(\on,v) \vee \Exi{u\leq v} \beta(\on,u)
$$
and so using (dni) we get $R^{\backsim} \vdash \p(x,v) \to \bot$ and so ($\exists$elim) completes the proof.

To prove (1) we start with $R^{\backsim} \vdash \neg\alpha(\on,\ok)$ and (aux) to obtain for any $k\leq m$:
$$
R^{\backsim} \vdash v = \ok \to \neg \alpha(\on,v)
$$
Therefore the proof is done thanks to ($\vee$elim) and $(R^\backsim4)$.

To prove (2) we use the claim $R^{\backsim} \vdash  \beta(\on,\om)$ together with (weakening), (identity), and ($\vee$intro) to obtain:
$$
R^{\backsim} \vdash \om \leq v \to \om\leq v \wedge \beta(\on,\om)
$$
and so ($\exists$intro) completes the proof.
\end{proof}

All that is left is to apply Theorem~\ref{t:RosGen} and Proposition~\ref{p:UndecUpward} to obtain the following results.

\begin{thm}
The theory $R^{\backsim}$ is essentially undecidable in $\PLog$.
\end{thm}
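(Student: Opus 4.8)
The plan is to invoke the abstract Rosser Theorem (Theorem~\ref{t:RosGen}) directly. That theorem states that any $\PLog$-consistent Rosser theory is essentially undecidable in $\PLog$, so it suffices to verify its two hypotheses for $T = R^\backsim$: that $R^\backsim$ is $\PLog$-consistent (in the $\Sigma_1$-sense fixed in Section~\ref{s:StructProof}) and that it is Rosser in $\PLog$. Both have effectively been prepared by the preceding results, so the remaining task is chiefly one of assembly and of checking that the pieces meet the definitions exactly.

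For $\PLog$-consistency I would argue from Soundness (Proposition~\ref{p:consR}). Recall that consistency here means that no $\Sigma_1$-formula $\f$ satisfies both $R^\backsim \vdash \f$ and $R^\backsim \vdash \neg\f$. Suppose for contradiction that some $\Sigma_1$-formula $\f$ did; then Proposition~\ref{p:consR} would give $\mathbb{N}\models\f$ and $\mathbb{N}\models\neg\f$. But the standard model $\mathbb{N}$ interprets all connectives classically, so this is impossible. Hence $R^\backsim$ is $\PLog$-consistent. (This is precisely the observation already flagged in the text immediately after Proposition~\ref{p:consR}.)

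For the Rosser property I would appeal to Lemma~\ref{l:rosser}. Given a pair of disjoint r.e.\ sets $A, B \subseteq \mathbb{N}$, that lemma supplies a single $\Sigma_1$-formula $\f(x)$ with $n \in A$ implying $R^\backsim \vdash \f(\on)$ and $n \in B$ implying $R^\backsim \vdash \neg\f(\on)$. Setting $\f_n := \f(\on)$ yields a family of $\Sigma_1$-formulas strongly separating $A$ from $B$; moreover this family is recursive, since it is obtained by substituting the recursively computable numeral $\on$ into one fixed formula, and each $\f(\on)$ is again a $\Sigma_1$-formula. This verifies the definition of a Rosser theory, and Theorem~\ref{t:RosGen} then delivers the conclusion.

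Honestly, at this stage there is no real obstacle: the genuine difficulties---establishing $\Sigma_1$-completeness over the weak logic $\PLog$ (Theorem~\ref{t:Sigma-comp}) and constructing the separating formula $\f(x)$ while coping with the non-$\Sigma_1$ character of $\neg\f$ (Lemma~\ref{l:rosser})---have already been overcome. The only points requiring care are bookkeeping ones: confirming that consistency is intended in the precise $\Sigma_1$-sense so that Soundness applies, and confirming that the separating family is recursive and consists of $\Sigma_1$-formulas, as the definition of \emph{Rosser} demands. (Proposition~\ref{p:UndecUpward} is not needed for this particular theorem; it is what one would use to transport the conclusion to the stronger logics and theories treated later.)
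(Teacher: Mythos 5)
Your proof is correct and matches the paper's own argument exactly: the paper likewise obtains this theorem by combining $\PLog$-consistency (from Proposition~\ref{p:consR}, via the classical standard model $\mathbb{N}$) with the Rosser property (from Lemma~\ref{l:rosser}) and then applying Theorem~\ref{t:RosGen}. You are also right that Proposition~\ref{p:UndecUpward} is needed only for the subsequent corollary about stronger logics, not for this theorem itself.
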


\begin{cor} \label{maincor}
Let $\logic{L}$ be a propositional logic expanding $\Log$, $\LOG$ a predicate logic expanding $\plogic{L}$, and $T$ a theory strengthening  $R^{\backsim}$ in \LOG. If $T$ is $\LOG$-consistent (i.e.\ proves $\f$ and $\neg\f$ for no $\Sigma_1$-formula $\f$), then it is essentially undecidable in $\LOG$.
\end{cor}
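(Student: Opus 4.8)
The plan is to obtain this corollary as a direct instance of Proposition~\ref{p:UndecUpward}, fed by the essential undecidability of $R^{\backsim}$ in $\PLog$ established in the theorem immediately above. First I would match the notation of that proposition by taking its base logic to be $\PLog$ over the set $\fm$ of formulas of Grzegorczyk's arithmetical language, its stronger logic $\LOG'$ to be the corollary's $\LOG$ over its (possibly richer) formula set $\fm'$, its essentially undecidable theory to be $R^{\backsim}$, and the proposition's theory $S$ to be the corollary's $T$. Under this dictionary the conclusion of Proposition~\ref{p:UndecUpward} is literally the conclusion we want, so the work reduces to checking its hypotheses.

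The structural hypotheses are three. First I would verify the inclusion $\PLog \subseteq \LOG$ of consequence relations on the common formulas: since $\LOG$ expands the first-order logic $\plogic{L}$ built over a propositional logic $\logic{L}$ that in turn expands $\Log$, and since both $\PLog$ and $\plogic{L}$ are axiomatised by the \emph{same} first-order schemata laid over their respective propositional bases, every axiom instance and rule of $\PLog$ is derivable in $\plogic{L}$, whence $\PLog \subseteq \plogic{L} \subseteq \LOG$. Second, because $T$ strengthens $R^{\backsim}$ the language of $\LOG$ must contain $\0, S, =, \leq, A, M$, so $\fm \subseteq \fm'$. Third, as the class of $\Sigma_1$-formulas is defined purely syntactically and the arithmetical language is preserved under the extension, every $\Sigma_1$-formula of $\fm$ remains a $\Sigma_1$-formula of $\fm'$.

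Finally I would observe that the remaining data demanded by the proposition are exactly the hypotheses of the corollary: $T$ strengthens $R^{\backsim}$ in $\LOG$ (i.e.\ $R^{\backsim} \subseteq C_{\LOG}(T)$), and $T$ being $\LOG$-consistent is precisely the $\Sigma_1$-consistency in $\LOG$ that the proposition calls ``consistent''. Applying Proposition~\ref{p:UndecUpward} then yields that $T$ is essentially undecidable in $\LOG$. I do not expect a genuine obstacle here, since there is no new mathematical content beyond the preceding theorem; the only point requiring care is the bookkeeping establishing $\PLog \subseteq \LOG$ together with the preservation of the $\Sigma_1$-class, which is exactly what licenses transferring essential undecidability upward to the richer logic and stronger theory.
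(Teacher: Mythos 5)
Your proposal is correct and matches the paper's approach exactly: the paper obtains this corollary precisely by applying Proposition~\ref{p:UndecUpward} to the preceding theorem that $R^{\backsim}$ is essentially undecidable in $\PLog$, with no further argument given. Your spelled-out verification of the proposition's hypotheses ($\PLog \subseteq \plogic{L} \subseteq \LOG$, preservation of the formula set and the $\Sigma_1$-class, and the identification of the corollary's consistency and strengthening assumptions with the proposition's) is just the routine bookkeeping the paper leaves implicit.
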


\section{A stronger logic}\label{s:stronger-logic}

Our stronger propositional logic in the propositional language $\lang{L}_0$, which we denote as $\DSL$---short for ``the $\lang{L}_0$-fragment of the non-associative Lambek calculus with left and right weakening''---is given by the following Hilbert style proof system (note here that $\DSL$, unlike $\Log$, is not paraconsistent as we include the axiom ($\bot$elim)):

\medskip

\begin{tabular}{l@{\quad}l}
 (identity) & $\f\to\f$
\\ ($\wedge$elim) & $(\f\wedge\p)\to\f$
\\ ($\wedge$elim) & $(\f\wedge\p)\to\p$ \hfill
\\ ($\wedge$intro) & $((\f\to\p)\wedge(\f\to\x))\to(\f\to\p\wedge\x)$
\\ ($\vee$intro)& $\f\to(\f\vee\p)$
\\ ($\vee$intro)& $\p\to(\f\vee\p)$ \hfill
\\ ($\vee$elim)& $(\f\to\x)\wedge(\p\to\x)\to(\f\vee\p\to\x)$
\\ (weakening) & $\f\to(\p\to \f)$ \hfill
\\ ($\bot$elim) & $\bot\to\f$ \hfill
\\ (MP) & $\f,\f\to\p\wdash\p$
\\ (adj) & $\f,\p\wdash\f\wedge\p$
\\ (tone$\to$) & $\f\to\p,\x\to\theta\wdash(\p\to\x)\to(\f\to\theta)$ \hfill
\\ (assertion) & $\f\wdash(\f\to\p)\to\p$
\end{tabular}

Let us first observe that $\DSL$ indeed extends $\Log$: the latter's rules of (weakening), ($\wedge$intro), and ($\vee$elim) follow from the corresponding axioms of $\DSL$ using the rules (MP) and (adj); (trans) follows from (tone$\to$) by taking $\p=\x$ using (identity) and (MP); and the rule (morg) follows from the axiomatic form stated below.

Let us note that the rule (tone$\to$) can be equivalently replaced by the following two rules:\footnote{For one direction juts consider suitable instances of (tone$\to$) and the axiom (identity); for converse direction first use (prefixing) to obtain $\x\to\theta\wdash(\p\to\x)\to(\p\to\theta)$ and then (suffixing) to obtain $\f\to\p\wdash(\p\to\theta)\to(\f\to\theta)$ and the rule (trans) completes the proof.}

\smallskip

\begin{tabular}{l@{\quad}l}
(suffixing) & $\f\to\p\wdash(\p\to\x)\to(\f\to\x)$  \\
(prefixing) & $\f\to\p\wdash (\x\to\f)\to(\x\to\p)$
\end{tabular}

\medskip

\noindent The following theorems/rules are derivable in $\Log$:\footnote{The first rule is a direct consequence of (suffixing); both (morg)s are consequences of ($\land$intro), ($\land$elim), ($\lor$intro), ($\lor$elim), and (cont); the stronger form of (weakening) follows from applying  ($\land$intro) twice on $\f\to(\p\to \p)$ and $\f\to(\p\to \f)$; (red) follows from applying (prefixing) on ($\bot$elim); and finally to obtain (exp) apply (prefixing) twice on $\f\wedge\p\to\x$ and use (weakening).}

\smallskip
\begin{tabular}{l@{\quad}l}
(cont) & $\f\to\p\wdash\neg\p\to\neg\f$.\\
(morg) & $\neg (\f \vee \p) \leftrightarrow \neg \f \wedge \neg \p$ \\
(morg) & $\neg \f \vee \neg \p \to \neg (\f \wedge \p)$ \\
(weakening) & $\f\to(\p\to \f\land\p)$ \\
(red) & $\neg\f \to (\f\to \x)$ \\
(exp) & $\f\wedge\p\to\x\wdash \f\to(\p\to\x)$\\
\end{tabular}

\begin{remark}
\DSL\ can be seen as a fragment of the non-associative Lambek calculus with left and right weakening (in the terminology of Galatos et al (2007) ($\bot$elim) is \emph{left} weakening and our (weakening) is their\/ \emph{right} weakening). It is indeed just a fragment: the language of the \emph{full} logic \DSL\ also involves fusion (residuated conjunction) and dual implication and it is well known that they are not definable from our connectives. The non-associativity of \DSL\ refers to the residuated conjunction, but this fact can be expressed, using implication and their statement of residuation, as the failure of formula $(\f\to\p)\to((\p\to\x)\to(\f\to\x))$. Therefore $\DSL$ is strictly weaker than H\'{a}jek's logic~{\rm BL}.

$\DSL$ can be seen as the extension of the positive fragment (without distribution) of the basic relevant logic \textbf{B}, studied, for instance, by Routley et al (1982), by the weakening axiom, the assertion rule, and negation defined in terms of $\bot$.
\end{remark}

Finally we need to prove two important facts about \emph{crisp} formulae, i.e.\ formulae $\p$ where $\p\lor\neg\p$ is provable. The first claim can be seen as converse of the derived rule (exp).


\begin{prop}\label{p:Crispness}
Assume that formula $\f$ is crisp in $T$ and $T\vdash \f\to (\p\to\x)$. Then also $T\vdash \f\wedge\p\to\x$. If furthermore $\p$ is also a crisp formula in $T$, then the formula $\f\vee\p$ is crisp in $T$ as well.
\end{prop}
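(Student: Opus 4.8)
The plan is to treat the two assertions separately; both reduce to a case analysis driven by the available crispness assumption and then glued together by the axiom ($\vee$elim), together with (adj) and (MP). For the first claim I would split on the provable disjunction $\f\vee\neg\f$ and show that each disjunct implies the target, i.e.\ I would establish
$$
T\vdash\f\to(\f\wedge\p\to\x)\qquad\text{and}\qquad T\vdash\neg\f\to(\f\wedge\p\to\x),
$$
after which ($\vee$elim), (adj), (MP) and the crispness of $\f$ yield $T\vdash\f\wedge\p\to\x$. The negative disjunct is immediate: from (red) $\neg\f\to(\f\to\x)$ and ($\wedge$elim) $\f\wedge\p\to\f$ one obtains $(\f\to\x)\to(\f\wedge\p\to\x)$ by (suffixing), and (trans) finishes. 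For the positive disjunct I would start from ($\wedge$elim) $\f\wedge\p\to\p$, apply (suffixing) to get $(\p\to\x)\to(\f\wedge\p\to\x)$, then lift this by (prefixing) to $(\f\to(\p\to\x))\to(\f\to(\f\wedge\p\to\x))$, and discharge the antecedent by (MP) against the hypothesis $T\vdash\f\to(\p\to\x)$.

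For the second claim I would first reduce the goal to proving $T\vdash D$, where $D=(\f\vee\p)\vee(\neg\f\wedge\neg\p)$. Indeed, the (morg) direction $\neg\f\wedge\neg\p\to\neg(\f\vee\p)$ together with ($\vee$intro) shows that both disjuncts of $D$ imply $(\f\vee\p)\vee\neg(\f\vee\p)$, so ($\vee$elim) gives $D\to[(\f\vee\p)\vee\neg(\f\vee\p)]$ and (MP) closes the argument once $D$ is available. To prove $D$ I would run a \emph{nested} case analysis, splitting first on $\p\vee\neg\p$ and then, inside the $\neg\p$ branch, on $\f\vee\neg\f$. The $\p$-case follows by ($\vee$intro) and (trans); the inner $\f$-case follows by ($\vee$intro) and (trans), with (weakening) used to carry the spare hypothesis $\neg\p$ into the conclusion as $\f\to(\neg\p\to D)$; and in the inner $\neg\f$-case the derived (weakening) in the form $\neg\f\to(\neg\p\to\neg\f\wedge\neg\p)$, composed via (prefixing) and (trans) with $\neg\f\wedge\neg\p\to D$, delivers $\neg\f\to(\neg\p\to D)$. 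Collapsing the inner split by ($\vee$elim), (adj), (MP) and crispness of $\f$ gives $\neg\p\to D$, and collapsing the outer split by ($\vee$elim), (adj), (MP) and crispness of $\p$ gives $D$.

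The point to stress is that $\DSL$ is non-distributive, so one cannot simply form $(\f\vee\neg\f)\wedge(\p\vee\neg\p)$ and distribute it into the four expected cases; this is exactly where a naive classical argument would break. The nested case analysis is the device that avoids distribution: instead of multiplying the two instances of excluded middle, it peels off one disjunction at a time, using (weakening) to push each still-undischarged assumption into the conclusion as an implication, and using (prefixing)/(suffixing) to move implications through antecedents and consequents. This bookkeeping is where essentially all the content of the proposition lies; the remaining moves are routine applications of ($\vee$intro), ($\wedge$elim), (adj) and (MP).
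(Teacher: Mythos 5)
Your proof is correct and follows essentially the same route as the paper's: for the first claim, the same case split on $\f\vee\neg\f$ with (red) handling the negative branch, and for the second, the same nested case analysis (outer on $\p$, inner on $\f$) with weakening used to carry the pending hypothesis $\neg\p$ through each branch. The only differences are cosmetic: you route the positive branch of claim one through (prefixing)+(MP) where the paper uses (trans), and in claim two you target the intermediate disjunction $(\f\vee\p)\vee(\neg\f\wedge\neg\p)$ and apply (morg) at the end, whereas the paper applies (morg) at the start and uses the derived rule (exp) --- which your strong-weakening-plus-(prefixing) step re-derives inline --- to work with the crispness formula directly.
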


\begin{proof}
We present formal derivations of both claims
\begin{itemize}
\item \begin{prooflist}
\item[(1)] $(\p\to\x)\to (\f\wedge\p\to\x)$ \hfill ($\wedge$elim) and (suffixing)
\item[(2)] $(\f\to\x)\to (\f\wedge\p\to\x)$ \hfill ($\wedge$elim) and (suffixing)
\item[(3)] $\f\to(\f\wedge\p\to\x)$ \hfill $\f\to (\p\to\x)$, (1), and (trans)
\item[(4)] $\neg\f\to(\f\wedge\p\to\x)$ \hfill (red), (2), and (trans)
\item[(5)] $\f\land\p\to\x$ \hfill (3), (4), ($\vee$elim) and crispness of $\f$.
\end{prooflist}

\pagebreak

\item Let us denote the formula $(\f\lor\p)\lor\neg(\f\lor\p)$ as $\x$
\begin{prooflist}
\item[(1)] $\f\to\x$ \hfill ($\lor$intro)
\item[(2)] $\p\to\x$ \hfill ($\lor$intro)
\item[(3)] $\neg\f\land\neg\p\to \x$ \hfill (morg), ($\lor$intro), and (trans)
\item[(4)] $\neg\f\to(\neg\p\to\x)$ \hfill(3) and (exp)
\item[(5)] $\f\to(\neg\p\to\x)$ \hfill (1), (weakening), and (trans)
\item[(6)] $\neg\p\to\x$ \hfill (4), (5), ($\lor$elim), and crispness of $\f$
\item[(7)] $\x$ \hfill (2), (6), ($\lor$elim), and  crispness of $\p$
\end{prooflist}
\end{itemize}
\vspace{-2.5ex}
\mbox{ }
\end{proof}

\section{$Q^\backsim$ strengthens $R^\backsim$ in $\PDSL$}\label{s:QandR}

In this section we prove that the arithmetical theory $Q^\backsim$ proves all theorems of $R^\backsim$ against the background of $\PDSL$. As a reminder, $Q^\backsim$ stands to $Q$ as $R^\backsim$ stands to $R$ and is axiomatised as follows:

\medskip

\begin{tabular}{l@{\ \ }l}
$(Q^\backsim0)$ & $x=y\lor x\neq y$
\\$(Q^\backsim1)$ &  $S(x) \neq \0$
\\ $(Q^\backsim2)$ & $S(x) =S(y) \to x=y$
\\ $(Q^\backsim3)$ & $x \neq \0  \to  \Exi{y} (x=S(y))$
\\ $(Q^\backsim4)$ & $A(x, \0, y)  \leftrightarrow  x=y$
\\ $(Q^\backsim5)$ & $A(x, S(y), z)  \leftrightarrow  \Exi{u}(A(x, y, u) \wedge z=S(u) )$
\\ $(Q^\backsim6)$ & $M(x, \0, y)  \leftrightarrow  y=\0$
\\ $(Q^\backsim7a)$ & $ M(x, S(y), z)\to \Exi{u}(M(x, y, u) \wedge A(u, x, z))$
\\ $(Q^\backsim7b)$ & $M(\om,\on,u)\to(A(u,\on, x)\to M(\om,\overline{n+1},x))$
\\ $(Q^\backsim8)$ & $x\leq y \leftrightarrow  \Exi{z}A(z, x, y)$
\end{tabular}

\medskip

\begin{remark}
It is noteworthy that our $Q^\backsim$ differs slightly from H\'{a}jek's. There are two ways in which this is the case. First, we include the additional axiom ($Q^\backsim$0) stating that identities are crisp. H\'{a}jek includes this as an assumption of the first order logic, whereas we build it directly into the theory.

In addition, his system includes only one axiom ($Q^\backsim7$)---the biconditional version of our $(Q^\backsim7a)$---and furthermore in his $(Q^\backsim5)$ and ($Q^\backsim$7), the conjunction occurring is the strong conjunction of {\rm BL} (that of which the conditional is residual). Note that all of our $Q^\backsim$ axioms are provable from H\'{a}jek's version of the theory in {\rm BL}. First, $(Q^\backsim7a)$ and $(Q^\backsim7b)$ are consequences of his version stated with strong conjunction. In addition, one can prove our $(Q^\backsim5)$ from his arithmetic theory in {\rm BL}. First, since $\f\to(\p\to\f\land\p)$ is provable in both our systems, our left-to-right direction of $(Q^\backsim5)$ is an immediate consequence of his (this is the same reason as that for why our $(Q^\backsim7a)$ is a consequence of his axiom). Second, $(\exists u)(A(x,y,u)\land z=S(u))\to A(x,S(y),z)$ is provable in his system given the crispness of identity, as he shows that in first order {\rm BL} (which is actually strictly stronger then $\plogic{{BL}}$ by using the additional axiom of constants domains), whenever $\f$ is crisp, then any weak conjunction of $\f$ with some other formula is equivalent to their strong conjunction; see H\'ajek (2007), remark 2.1(1). So, since $z=S(u)$ is crisp, the result follows.

Hence, our results do indeed generalise H\'ajek's, despite our using a variant on his $Q^\backsim$.
\end{remark}

Note that in $Q^\backsim$, thanks to ($Q^\backsim$0), Prop~\ref{p:Crispness}, and (exp), we can replace the identity axiom ($=$prin) for formulas by its equivalent formulation:

\smallskip

\begin{tabular}{l@{\ \ }l}
($=$prin) &  $x=y\wedge \f(x) \to\f(y)$
\end{tabular}

\begin{thm}\label{fund}
$Q^\backsim$ strengthens $R^\backsim$ in $\emph{Q}\DSL$.
\end{thm}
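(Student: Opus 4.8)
The plan is to prove that $Q^\backsim$ derives each of the six axioms of $R^\backsim$, working throughout in $\plogic{\DSL}$ and using the crispness tools of Proposition~\ref{p:Crispness} freely. Since $Q^\backsim$ carries the axioms in functional/relational form together with the crispness of identity, the strategy for the $R^\backsim$ axioms splits naturally: the numeral-computation axioms $(R^\backsim1)$, $(R^\backsim2)$, $(R^\backsim3)$ should follow by induction on the numerals, computing $A(\om,\on,x)$ and $M(\om,\on,x)$ via the recursion axioms $(Q^\backsim4)$--$(Q^\backsim7b)$; the ordering axioms $(R^\backsim4)$, $(R^\backsim5)$, $(R^\backsim6)$ should follow from the definition of $\leq$ given by $(Q^\backsim8)$ together with the successor axioms $(Q^\backsim1)$--$(Q^\backsim3)$ and, crucially, the crispness axiom $(Q^\backsim0)$.

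First I would establish the base facts about addition as relations: using $(Q^\backsim4)$ one gets $A(\om,\0,x)\leftrightarrow \om=x$, and then by induction on $n$, applying $(Q^\backsim5)$ at each step, one shows $A(\om,\on,x)\leftrightarrow \overline{m+n}=x$, which is exactly $(R^\backsim1)$. The multiplication case $(R^\backsim2)$ is the analogous induction, but here lies the first real subtlety: $Q^\backsim$ splits the multiplication-successor axiom into the one-directional $(Q^\backsim7a)$ and $(Q^\backsim7b)$, precisely because the biconditional is not available in the weak logic. So I would prove the two directions of $(R^\backsim2)$ separately — the left-to-right using $(Q^\backsim7a)$ followed by the (already established) $(R^\backsim1)$ to resolve the inner $A(u,\om,\cdot)$, and the right-to-left using $(Q^\backsim7b)$ — threading the numeral values through carefully. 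Axiom $(R^\backsim3)$, the distinctness of distinct numerals, should come from $(Q^\backsim1)$ and $(Q^\backsim2)$ by the usual induction.

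The ordering axioms are where the crispness machinery does its work. For $(R^\backsim4)$ I would unfold $x\leq\on$ via $(Q^\backsim8)$ to $\Exi{z}A(z,x,\on)$ and then argue, using $(R^\backsim1)$ and $(R^\backsim3)$, that a witness $z$ forces $x$ to be one of $\0,\ldots,\on$; the converse direction builds an explicit witness for each disjunct. For $(R^\backsim5)$, the totality-of-order disjunction, I expect to induct on $n$, using $(Q^\backsim3)$ to case-split $x$ on whether it is $\0$ or a successor, and here the crispness of identity $(Q^\backsim0)$ (via Proposition~\ref{p:Crispness}, which lets me treat weak conjunctions with crisp formulas and perform the disjunctive case analysis) is what makes the argument go through in the weak setting. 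Axiom $(R^\backsim6)$, excluded middle for $x\leq\on$, should then follow by showing $x\leq\on$ is crisp: since $(R^\backsim4)$ expresses it as a finite disjunction of identities, each of which is crisp by $(Q^\backsim0)$, the second half of Proposition~\ref{p:Crispness} (closure of crispness under $\vee$) yields crispness of the whole disjunction, hence $(R^\backsim6)$.

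The main obstacle I anticipate is the multiplication axiom $(R^\backsim2)$: because $Q^\backsim$ deliberately weakens the multiplicative recursion to the pair $(Q^\backsim7a)$/$(Q^\backsim7b)$ to survive in a logic lacking full residuation, neither direction of the biconditional comes for free, and the right-to-left direction in particular requires feeding the correct numeral witness $u=\overline{m\cdot n}$ into $(Q^\backsim7b)$ and combining it with $(R^\backsim1)$ to certify $A(u,\om,\cdot)$. Getting these witnesses to line up without appealing to functionality or totality of $A,M$ — which are not assumed — is the delicate part, and it is the place where I would spend the most care in the formal derivation.
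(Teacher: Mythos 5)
Your overall decomposition follows the paper's proof for most of the axioms: $(R^\backsim1)$--$(R^\backsim3)$ by metainduction on numerals, the two directions of $(R^\backsim2)$ handled separately via $(Q^\backsim7a)$ and $(Q^\backsim7b)$ with the numeral witness $u=\overline{m\cdot n}$ fed in, the right-to-left half of $(R^\backsim4)$ by explicit witnesses, and $(R^\backsim6)$ from $(R^\backsim4)$ plus closure of crispness under disjunction (Proposition~\ref{p:Crispness}). However, there is a genuine gap in your plan for the \emph{left-to-right} direction of $(R^\backsim4)$. You propose to unfold $x\leq\on$ via $(Q^\backsim8)$ into $\Exi{z}A(z,x,\on)$ and then argue ``using $(R^\backsim1)$ and $(R^\backsim3)$'' that a witness $z$ forces $x$ to be one of $\0,\dots,\on$. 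This cannot work: $(R^\backsim1)$ and $(R^\backsim3)$ constrain $A$ only when its first two arguments are \emph{numerals}, whereas here $z$ and $x$ are free variables, and $Q^\backsim$ assumes neither functionality nor totality of $A$. Concretely, the classical structure with domain $N\cup\{c\}$, with $A$ interpreted as the graph of addition together with the extra triple $(c,c,0)$, and $\leq$ defined as in $(Q^\backsim8)$, satisfies $(Q^\backsim8)$, $(R^\backsim1)$, and $(R^\backsim3)$ yet has $c\leq\0$ and $c\neq\0$; since classical models are models of the weak logic, no derivation of $x\leq\on\to(x=\0\vee\dots\vee x=\on)$ from the resources you cite can exist.

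What is needed---and what the paper does---is an induction on the structure of $x$ itself, supplied by two preliminary lemmas proved before any of the $R^\backsim$ axioms: \emph{Claim 1}, $x\leq y\leftrightarrow S(x)\leq S(y)$, derived from $(Q^\backsim5)$, $(Q^\backsim2)$, and $(Q^\backsim8)$; and \emph{Claim 2}, the zero-or-successor case-split principle (if $Q^\backsim\vdash\f(\0)$ and $Q^\backsim\vdash\f(S(y))$, then $Q^\backsim\vdash\f(x)$), which rests on $(Q^\backsim3)$, $(Q^\backsim0)$, (aux), and ($\vee$elim). The hard direction of $(R^\backsim4)$ then goes by metainduction on $n$: the base case $x\leq\0\to x=\0$ uses Claim 2 together with $(Q^\backsim1)$, $(Q^\backsim5)$, and ($\bot$elim) to exclude $S(y)\leq\0$, and the inductive step applies Claim 2 and reduces $S(y)\leq\onn$ to $y\leq\on$ via Claim 1. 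You clearly have the Claim 2 idea in hand---you invoke exactly this case-split machinery for $(R^\backsim5)$---but the $\leq$-successor lemma (Claim 1) is entirely absent from your sketch, and it is also needed in the inductive step of your $(R^\backsim5)$ argument, where one must pass from $y\leq\on\vee\on\leq y$ to $S(y)\leq\onn\vee\onn\leq S(y)$. With those two lemmas inserted and $(R^\backsim4)$ rerouted through them, your proposal becomes the paper's proof.
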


\begin{proof}
Before we start proving the axioms of $R^\backsim$ let us prove two useful preliminaries:
\begin{description}
\item[Claim 1] $Q^\backsim \vdash x\leq y \leftrightarrow  S(x) \leq S(y))$.

\item[Claim 2] For any formula $\f(x)$ such that $Q^\backsim \vdash \f(\0)$ and $Q^\backsim \vdash \f(S(y))$ we have $Q^\backsim \vdash \f(x)$.
\end{description}

To prove the first claim it clearly suffices to prove $A(x,y,z) \leftrightarrow  A(x,S(y),S(z))$ and use ($\exists$intro), ($\exists$elim) and $(Q^\backsim8)$ to complete the proof. The proof of the left-to-right implication is easy: clearly $A(x,y,z) \to A(x,y,z)\wedge S(z) = S(z)$. Thus, by ($\exists$intro) and (trans), $A(x,y,z) \to \Exi{u}(A(x,y,u)\wedge S(z) = S(u))$ and so $(Q^\backsim5)$ completes the proof. The converse implication is a bit more complex:
\begin{prooflist}
\item[(1)] $S(z)= S(u) \to z = u$  \hfill   ($Q^\backsim2$)
\item[(2)] $A(x, y, u)\wedge S(z)=S(u) \to A(x, y, u)\wedge z = u $ \hfill  (1), ($\land$intro), ($\land$elim)
\item[(3)] $A(x, y, u)\wedge S(z)=S(u) \to A(x, y, z)$  \hfill   (2), ($=$prin), and (trans)
\item[(4)] $\Exi{u}(A(x, y, u)\wedge S(z)=S(u)) \to A(x, y, z)$  \hfill   (3) and ($\exists$elim)
\item[(5)] $A(x,S(y),S(z)) \to A(x, y, z) $  \hfill   ($Q^\backsim5$), (4), (trans)
\end{prooflist}

To prove the second claim let us use (aux) for both premises to obtain $Q^\backsim \vdash x = 0 \to \f(\0)$ and $Q^\backsim \vdash x = S(y) \to \f(x)$. Using ($\exists$elim)  and $(Q^\backsim3)$ we obtain $Q^\backsim \vdash x \neq \0 \to \f(x)$ and thus ($\vee$elim) and $(Q^\backsim0)$ complete the proof.

\medskip

$(R^\backsim1)$: First we prove $A(\om, \on, x) \to x= \overline{m+n}$ by metainduction on $n$. The case $\on = \0$ follows from ($Q^\backsim4$).  The inductive case:

\begin{prooflist}
\item[(1)] $A(\om, \on, u) \to u = \overline{m+n}$ \hfill by IH

\item[(2)] $A(\om, \on, u) \wedge x = S(u) \to u = \overline{m+n} \wedge x = S(u)$ \hfill (1), ($\wedge$intro), ($\wedge$elim)


\item[(3)] $A(\om, \on, u) \wedge x = S(u) \to x =  \overline{m+n+1}$ \hfill (2), ($=$prin), (trans)

\item[(4)] $\Exi{u}(A(\om, \on, u) \wedge x = S(u)) \to x =  \overline{m+n+1}$ \hfill (3), ($\exists$elim)

\item[(5)] $A(\om, \onn, u)  \to x =  \overline{m+n+1}$ \hfill (4), ($Q^\backsim5$), (trans)
\end{prooflist}

To prove the converse direction we show, again by metainduction on $n$, that $A(\om, \on, \overline{m+n})$ and use (aux) to complete the proof. Again, the case $\on = \0$ follows from ($Q^\backsim4$) and the inductive case follows immediately from the induction assumption and the fact that $A(x,y,z) \leftrightarrow  A(x,S(y),S(z))$ which we established in the proof of Claim 1.

\medskip

$(R^\backsim2)$: The proof is similar; first establish $M(\om, \on, x) \to x= \overline{m\cdot n}$ by metainduction on $\on$. The case $\on = \0$ follows from $(Q^\backsim6)$. The inductive case:

\begin{prooflist}
\item[(1)] $M(\om, \on, u) \to u = \overline{m\cdot n}$ \hfill by IH

\item[(2)] $M(\om, \on, u) \wedge A(u,\om,x) \to u = \overline{m\cdot n} \wedge A(u,\om,x)$ \hfill (1), ($\land$intro), ($\land$elim)

\item[(3)] $M(\om, \on, u) \wedge A(u,\om,x) \to  A(\overline{m\cdot n},\om,x)$ \hfill (2), ($=$prin), (trans)




\item[(4)] $M(\om, \on, u) \wedge A(u,\om,x) \to x =  \overline{m\cdot (n+1)}$ \hfill (3), $(R^\backsim1)$, (trans)

\item[(5)] $\Exi{u}(M(\om, \on, u) \wedge A(u,\om,x)) \to x =  \overline{m\cdot (n+1)}$ \hfill (4), ($\exists$elim)

\item[(6)] $M(\om, \onn, u)  \to x =  \overline{m\cdot (n+1)}$ \hfill (5), ($Q^\backsim7a$), (trans)
\end{prooflist}

To prove the converse direction we show, again by metainduction on $n$, that $M(\om, \on, \overline{m\cdot n})$ and then (aux) completes the proof. Again, the case $\on = \0$ follows from $(Q^\backsim6)$. The inductive case:

\begin{prooflist}
\item[(1)] $ M(\om, \on, \overline{m\cdot n})$ \hfill IH

\item[(2)] $ A(\overline{m\cdot n},\on,\overline{m\cdot (n+1)})$ \hfill $(R^\backsim1)$

\item[(3)] $M(\om, \on, \overline{m\cdot n}) \to (A(\overline{m\cdot n},\on,\overline{m\cdot (n+1)}) \to M(\om, \onn, \overline{m\cdot (n+1)}))$ \hfill ($Q^\backsim7b$)

\item[(4)] $M(\om, \onn, \overline{m\cdot (n+1)})$ \hfill  (3) and  MP twice
\end{prooflist}

\medskip

$(R^\backsim3)$: It suffices to establish the case where $n < m$. Observe that $\om = \on \to \overline{m-n} = \0$ by repeated use of $(Q^\backsim2)$. As $m - n \neq 0$ we know that $\om = \on \to S(\overline{m-n-1}) = \0$ and as $S(\overline{m-n-1}) = \0 \to \bot$ due to ($Q^\backsim1$), the claim follows.

\medskip

$(R^\backsim4)$: To prove the right-to-left direction observe that for $k \leq n$ we have $A(\overline{n - k},\ok,\on)$ due to $(R^\backsim1)$
and so $\ok\leq \on$ using ($\exists$intro) and $(Q^\backsim8)$ and thus $x = \ok \to x\leq \on$ by (aux). Repeated use of $\vee$elim) then completes the proof of this direction.

To prove the converse direction set $\f_n = x\leq \on \to x=\0 \vee x=\1 \vee \dots \vee x=\on$ and we prove $\f_n$ by metainduction over $n$.

For the base case, $x\leq\0\to x=\0$, we employ Claim 2. First note that $\0\leq\0\to\0=\0$ follows from (weakening). Next, $(Q^\backsim1)$ gives us that $\0\neq S(u)$ and so by ($\wedge$elim) and (cont), $\neg(\exists u)(A(z,y,u)\land\0=S(u))$ holds. By $(Q^\backsim5)$, it follows that $\neg A(z,S(y),\0)$, and thus ($\bot$elim), (trans), ($\exists$elim), and $(Q^\backsim8)$ entail that $S(y)\leq\0\to S(y)=\0$. So Claim 2 delivers the desired result.

Next, observe that thanks to ($\vee$intro) and (weakening) we have $\f_{n+1}(\0)$ and so if we prove  $\f_{n+1}(S(y))$ the claim follows using Claim 2.

\begin{prooflist}
\item[(1)] $S(y) \leq \onn \to y \leq \on$ \hfill Claim 1
\item[(2)] $S(y) \leq \onn \to y = \0 \vee y =\1\vee \dots y = \on$ \hfill IH, (2), and (trans)
\item[(3)] $y = \0 \vee y =\1\vee \dots y = \on \to S(y) = \0 \vee S(y) = \1 \vee \dots S(y) = \onn$
\item[]    \mbox{}     \hfill repeated use of ($=$prin), ($\vee$elim), and ($\vee$intro).
\item[(4)] $\f_{n+1}(S(y))$ \hfill (2), (3), and (trans)
\end{prooflist}

\medskip

$(R^\backsim5)$: Let us set $\f_n = x\leq \on \vee \on\leq x$ and we prove $\f_n$ by metainduction over $n$. Clearly from ($Q^\backsim4$) and $(Q^\backsim8)$ we get that $\0 \leq x$ and so by ($\vee$intro) we obtain both the base case and also  $\f_{n+1}(\0)$. Thus again proving $\f_{n+1}(S(y))$ completes the prof due to Claim 2.
\begin{prooflist}
\item[(1)] $y \leq \on \to S(y) \leq \onn $ \hfill Claim 1
\item[(2)] $\on \leq y \to \onn \leq S(y) $ \hfill Claim 1
\item[(3)] $y\leq \on \vee \on\leq y \to S(y)\leq \onn \vee \onn\leq S(y)$ \hfill (2), (3),  ($\vee$elim), and ($\vee$intro)
\item[(4)] $\f_{n+1}(S(y))$ \hfill (3), IH, and (trans)
\end{prooflist}

\medskip

$(R^\backsim6)$: Thanks to $(R^\backsim4)$ we know that $x\leq \on$ is equivalent to a disjunction of crisp formulas and so it is crisp as a result of Proposition~\ref{p:Crispness}.
\end{proof}

As before, it is obvious that the structure $\mathbb{N}$ can be interpreted as a model of $Q^{\backsim}$ in Q$\DSL$, hence $Q^{\backsim}$ is consistent in $\PDSL$. Therefore the previous theorem and Corollary~\ref{maincor}, allows us to prove the following theorem, which can indeed be seen as a generalisation of H\'{a}jek's result in first order {\rm BL}.

\begin{cor}
$Q^\backsim$ is essentially undecidable in Q$\DSL$.
\end{cor}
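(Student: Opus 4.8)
The plan is to obtain the corollary as a direct instance of Corollary~\ref{maincor}, taking the propositional logic to be $\DSL$, the predicate logic to be $\PDSL$, and the theory $T$ to be $Q^\backsim$ itself. Corollary~\ref{maincor} has four hypotheses, three of which are already in place: $\DSL$ expands $\Log$ (verified at the start of Section~\ref{s:stronger-logic}); $\PDSL$ is by definition the first-order extension of $\DSL$, so the requirement that the predicate logic expand $\plogic{L}$ (in the case $\logic{L}=\DSL$) holds trivially; and $Q^\backsim$ strengthens $R^\backsim$ in $\PDSL$ by Theorem~\ref{fund}. Thus the only remaining task is to check that $Q^\backsim$ is $\PDSL$-consistent.

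For consistency I would reuse the soundness strategy behind Proposition~\ref{p:consR}. Since $\DSL$ is a sublogic of classical logic, interpreting every connective and quantifier classically in the standard model $\mathbb{N}$ validates all axioms and rules of $\PDSL$, so it suffices to verify that each $Q^\backsim$ axiom is true in $\mathbb{N}$ under the classical readings of $=$, $S$, $\leq$ and of the graphs $A^{\mathbb{N}}$, $M^{\mathbb{N}}$. This is routine: $(Q^\backsim0)$ is excluded middle for equalities, $(Q^\backsim1)$--$(Q^\backsim3)$ are standard successor facts, and $(Q^\backsim4)$--$(Q^\backsim8)$ merely encode the recursion clauses for $+$ and $\cdot$ together with the definition of $\leq$. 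Hence $Q^\backsim\vdash\f$ implies $\mathbb{N}\models\f$; and since $\mathbb{N}$ cannot satisfy both a $\Sigma_1$-formula $\f$ and its negation $\neg\f$, there is no $\Sigma_1$-formula $\f$ with both $Q^\backsim\vdash\f$ and $Q^\backsim\vdash\neg\f$, which is exactly $\PDSL$-consistency.

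With all four hypotheses secured, Corollary~\ref{maincor} yields at once that $Q^\backsim$ is essentially undecidable in $\PDSL$. I do not anticipate a genuine obstacle here: all of the mathematical substance already lives in Theorem~\ref{fund} (the strengthening) and, upstream, in Lemma~\ref{l:rosser} and Theorem~\ref{t:RosGen}. The only point deserving care is the bookkeeping verification that $\PDSL$ really is the first-order extension of $\DSL$ in the precise sense of Section~\ref{s:logic}, so that the chosen instance of Corollary~\ref{maincor} applies verbatim.
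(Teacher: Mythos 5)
Your proposal is correct and follows exactly the paper's own route: the paper likewise establishes $\PDSL$-consistency of $Q^\backsim$ by observing that $\mathbb{N}$ (with everything read classically) is a model of $Q^\backsim$ in $\PDSL$, and then combines Theorem~\ref{fund} with Corollary~\ref{maincor}. Your write-up is just a more explicit spelling-out of the hypothesis-checking that the paper leaves as "obvious."
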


\section{Concluding remarks}

We have shown that the weak arithmetic theory $R^\backsim$ is essentially undecidable against the background of the weak propositional logic $\Log$ extended by minimal first-order axioms. The first upshot of this is that the cost of entry for essential undecidability is very low indeed -- one needs only a fairly weak arithmetic theory and a fairly weak logic. Furthermore, we can show that $R^\backsim$ is a weaker theory than even the very weak $Q^\backsim$ in the context of a slightly stronger (but still quite weak) logic. This extends and strengthens H\'ajek's result and suggests avenues of further investigation, perhaps using Smullyan's representation systems, into the limits of undecidability in mathematical theories against the background provided by weak logics. 

\section{Acknowledgments}

P.\ Cintula was supported by the project GA17-04630S of the Czech Science Foundation (GA\v{C}R) and by RVO 67985807. A.\ Tedder was supported by the GA\v{C}R project 18-19162Y. Thanks are due to an anonymous referee for helpful comments. This paper was presented at the Melbourne Logic Seminar, the conference Logic Colloquium in Prague, and the conference Services to Logic: 50 Years of the Logicians' Liberation League in Mexico City. We are grateful to the audiences in all these venues. Finally, Albert Visser provided some useful comments on an earlier version of this work.

\vspace*{10pt}

\

\

\address{SCHOOL OF HISTORICAL AND PHILOSOPHICAL INQUIRY\\
\hspace*{9pt}UNIVERSITY OF QUEENSLAND\\
\hspace*{18pt}ST LUCIA QLD 4072, AUSTRALIA\\
{\it E-mail}: guillebadia89@gmail.com\\[8pt]\\
\hspace*{9pt}THE INSTITUTE OF COMPUTER SCIENCE OF THE CZECH ACADEMY OF SCIENCES\\
\hspace*{18pt}PRAGUE 8, 182 00, CZECH REPUBLIC\\
{\it E-mail}: cintula@cs.cas.cz\\
{\it E-mail}: ajtedder.at@gmail.com}
\clearpage

\end{document}